\numberwithin{equation}{section}
\newcommand{\Z}{\mathbb{Z}}
\newcommand{\Q}{\mathbb{Q}}
\newcommand{\C}{\mathbb{C}}
\newcommand{\bd}{\begin{description}}
\newcommand{\ed}{\end{description}}
\newcommand{\lcm}{\operatorname{lcm}}
\newcommand{\SL}{\operatorname{SL}}
\newtheorem{theorem}{Theorem}[section]
\newtheorem{cor}[theorem]{Corollary}
\newtheorem{lemma}[theorem]{Lemma}
\newtheorem{prop}[theorem]{Proposition}
\theoremstyle{definition}
\theoremstyle{remark}
\newtheorem{remark}{Remark}[section]
\newtheoremstyle{dotless}{}{}{\itshape}{}{\bfseries}{}{ }{}
\theoremstyle{dotless}
\numberwithin{equation}{section}
\begin{document}
\title{Lacunary Eta-quotients Modulo Powers of Primes}
\author{Tessa Cotron}
\address{Department of Mathematics, Emory University, 201 Dowman Drive, Atlanta, GA 30322}
\email{tessa.cotron@emory.edu}

\author{Anya Michaelsen}
\address{Department of Mathematics, Williams College, 33 Stetson, Williamstown, MA 01267}
\email{anm1@williams.edu}

\author{Emily Stamm}
\address{Department of Mathematics, Vassar College, 124 Raymond Ave, Poughkeepsie, NY 12604}
\email{emily.stamm.12@gmail.com}

\author{Weitao Zhu}
\address{Department of Mathematics, Williams College, 33 Stetson, Williamstown, MA 01267}
\email{wz1@williams.edu}

\begingroup
\def\uppercasenonmath#1{}
\let\MakeUppercase\relax 
\maketitle
\endgroup
\begin{abstract}
An integral power series is called lacunary modulo $M$ if almost all of its coefficients are divisible by $M$. Motivated by the parity problem for the partition function Gordon and Ono studied the generating functions for $t$-regular partitions, and determined conditions for when these functions are lacunary modulo powers of primes. We generalize their results in a number of ways by studying infinite products called Dedekind eta-quotients and generalized Dedekind eta-quotients. We then apply our results to the generating functions for the partition functions considered by Nekrasov, Okounkov, and Han.
\end{abstract}
\keywords{partitions; eta-quotients; nekrasov okounkov formula; lacunary; eta-function; Modular forms; generalized eta-quotients; powers of primes}
\subjclass[2010]{11P02}

\section{Introduction}
A \textit{partition} of a positive integer $n$ is a nonincreasing sequence of positive integers whose sum is $n$. For example, the set of partitions of 4 is
\begin{equation*}
\{4,\quad 3+1,\quad 2+2,\quad 2+1+1,\quad 1+1+1+1\}.
\end{equation*}
The partition function $p(n)$ counts the number of partitions of $n$. From the above example we see that $p(4)=5$. The generating function for $p(n)$ satisfies the identity
\begin{equation*}\label{pn}
P(q):=\sum\limits_{n=0}^{\infty}p(n)q^n=\prod\limits_{n=1}^{\infty}\frac{1}{(1-q^n)}.
\end{equation*}
The partition function has many congruence properties modulo primes and powers of primes, the most famous of which are the Ramanujan congruences \cite{ramanujan}:
\begin{align*}
p(5n+4)&\equiv 0\pmod 5,\\
p(7n+5)&\equiv 0\pmod 7,\\
p(11n+6)&\equiv 0\pmod {11}
\end{align*}
for all $n\geq 0$.
Few results of this form were known until Ahlgren and Ono  \cites{ono2,ahlgren} showed that there are infinitely many congruences of the form
\begin{equation*}
p(An+B)\equiv 0\pmod{m},
\end{equation*}
for any integer $m$ relatively prime to 6.

Although no analogous theorem exists for the partition function modulo the primes $2$ and $3$, Parkin and Shanks conjectured that half of the values for $p(n)$ are even and half are odd \cite{parkin}. To be precise, given an integral power series $F(q):=\sum \limits_{n\gg -\infty} a(n)q^n$, we define 
\[
\delta(F,M;X) := \frac{\#\{n\leq X\ :\ a(n)\equiv 0 \pmod{M}\}}{X}.
\]
It is conjectured that $\delta(P,2;X)$ and $\delta(P,3;X)$ tend to $\tfrac{1}{2}$ and $\tfrac{1}{3},$ respectively, as $X$ approaches infinity. The table below contains values of $\delta(P,2;X)$ and $\delta(P,3;X)$ for $X$ up to $500,000$. 
\begin{table}[ht] 
\centering
\begin{tabular}{|c|c|c|}
\hline 
$X$ & $\delta(P,2;X)$ & $\delta(P,3;X)$\\\hline
100,000 & 0.4980$\ldots$  & 0.3334$\ldots$ \\\hline
200,000 & 0.5012$\ldots$  & 0.3332$\ldots$\\\hline
300,000 & 0.5008$\ldots$  & 0.3335$\ldots$\\\hline
400,000 & 0.5000$\ldots$  & 0.3339$\ldots$\\\hline
500,000& 0.5000$\ldots$  & 0.3343$\ldots$\\\hline
\end{tabular}
\vspace{.1 in}
\caption{Data for $p(n)$.}
\end{table}
\FloatBarrier
\noindent
Although calculations strongly support this conjecture, it remains unproven. Moreover, it remains unknown whether a positive proportion of the values of $p(n)$ are even (resp. odd), or whether an infinite number of the values of $p(n)$ lie in any fixed residue class modulo $3$. 

In contrast to the behavior of equal distribution, the series $F(q) = \sum \limits_{n \gg - \infty}a(n)q^n$ is called \textit{lacunary modulo M} if
\begin{equation*}
\lim_{X\rightarrow\infty}\delta(F,M;X)=1,
\end{equation*}
i.e. if almost all of the coefficients of $F$ are divisible by $M$. 
In \cite{gordon}, Gordon and Ono studied the lacunarity of the generating function $G_t(\tau)$ of the $t$-regular partition function $b_t(n)$ defined by
\begin{equation*}
G_t(\tau):=\sum\limits_{n=0}^{\infty}b_t(n)q^n=\prod\limits_{n=1}^{\infty}\frac{(1-q^{tn})}{(1-q^n)}=q^{\frac{1-t}{24}}\frac{\eta(t\tau)}{\eta(\tau)},
\end{equation*}
where $q:=e^{2\pi i\tau}$ and the \textit{Dedekind eta-function} $\eta(\tau)$ is the weight $\tfrac{1}{2}$ modular form 
\begin{equation*}\label{eta}
\eta(\tau):=q^{\frac{1}{24}}\prod\limits_{n=1}^{\infty}(1-q^n)
\end{equation*}
(combinatorially, the function $b_t(n)$ can be interpreted as counting the number of partitions of $n$ with no summands divisible by $t$.)
Gordon and Ono proved that for the finite set of primes $\ell$ such that $\ell^a$  divides $t,$ if $\ell^a\geq \sqrt{t},$ then $G_t(\tau)$ is lacunary modulo $\ell^j$ for any positive integer $j$ \cite{gordon}. 

Similar to $b_t(n)$, the generating functions of many partition functions can be expressed in terms of $\eta(\tau)$. Motivated by this, we study here the lacunarity of eta-quotients of the form
\begin{equation}\label{BigEtaQuot}
G(\tau) :=\frac{\eta(\delta_1\tau)^{r_1}\eta(\delta_2\tau)^{r_2}\cdots\eta(\delta_u\tau)^{r_u}}{\eta(\gamma_1\tau)^{s_1}\eta(\gamma_2\tau)^{s_2}\cdots\eta(\gamma_t\tau)^{s_t}}=q^{\tfrac{E_G}{24}}\sum\limits_{n=0}^{\infty}b(n)q^{n} ,
\end{equation}
where $r_i$, $s_i$, $\delta_i$, and $\gamma_i$ are  positive integers with $\delta_1,\ldots, \delta_u,\gamma_1,\ldots, \gamma_t$ distinct, $u,t\geq 0$, and 
$$E_G: = \sum \limits_{i=1}^u\delta_ir_i-\sum \limits_{i=1}^t\gamma_is_i$$
(note: we will say that $G(\tau)$ is lacunary modulo $M$ when the series $\sum_{n=0}^\infty b(n)q^n$ has that property and similarly for $H(\tau)$ defined below).
Since the eta-function has weight $\tfrac{1}{2}$, the weight of $G(\tau)$ is
\begin{equation*}
k:=\frac{1}{2}\left(\sum_{i=1}^u r_i-\sum_{i=1}^t s_i\right).
\end{equation*}
Define $\mathcal{D}_ G: = \gcd(\delta_1,\ldots, \delta_u)$. 
Generalizing the result from \cite{gordon} stated above, we prove the following theorem.
\begin{theorem}\label{thm1}
Suppose $G(\tau)$ is an eta-quotient of the form (\ref{BigEtaQuot}) with integer weight. If $p$ is a prime such that $p^a$ divides $\mathcal{D}_ G$ and 
\begin{equation*}
p^{a}\geq\;\sqrt{\frac{\sum\limits_{i=1}^t\gamma_i s_i}{\sum\limits_{i=1}^u\frac{r_i}{\delta_i}}},
\end{equation*}
then $G(\tau)$ is lacunary modulo $p^j$ for any positive integer $j$.
Moreover, there exists a positive constant $\alpha$, depending on $p$ and $j$, such that the number of integers $n\leq X$ with $p^j$ not dividing $b(n)$ is  $O\left(\frac{X}{\log^\alpha X}\right)$.
\end{theorem}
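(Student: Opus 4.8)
The plan is to reduce the theorem to Serre's theorem on the lacunarity of cusp forms by producing, for each $j\ge 1$, a holomorphic cusp form of positive integer weight on a congruence subgroup whose $q$-expansion agrees with that of $G(\t)$ modulo $p^j$. Recall that Serre's result asserts: if $f=\sum a(n)q^n$ is a cusp form of positive integer weight on $\Gamma_0(N)$ with nebentypus and integer coefficients, then $\#\{n\le X:\ a(n)\not\equiv 0\pmod{p^j}\}=O(X/\log^{\alpha}X)$ for some $\alpha>0$ depending on $p^j$. Since the assertion about $G$ concerns the integral power series $\sum b(n)q^n$, and power series congruent modulo $p^j$ have identical reductions, it suffices to exhibit a single such cusp form congruent to $G$ modulo $p^j$; the power-saving error term is then inherited verbatim.

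First I would build a multiplier congruent to $1$. For a prime $p$ and integers $m\ge 1$, the Frobenius congruence $\prod_n(1-x^n)^{p^m}\equiv\prod_n(1-x^{p^m n})\pmod p$ gives $\eta(\gamma\t)^{p^m}\equiv\eta(p^m\gamma\t)\pmod p$, so the eta-quotient $\eta(\gamma\t)^{p^m}/\eta(p^m\gamma\t)$ is an integral power series congruent to $1$ modulo $p$; raising to the $p^{j-1}$ power and using that $u\equiv 1\pmod p$ implies $u^{p^{j-1}}\equiv 1\pmod{p^j}$ yields a factor congruent to $1$ modulo $p^j$. Taking a product of such factors over the denominator data, set
\[
\Psi(\t):=\prod_{i=1}^t\left(\frac{\eta(\gamma_i\t)^{p^c}}{\eta(p^c\gamma_i\t)}\right)^{s_ip^{j-1}},\qquad \mathfrak g_j(\t):=G(\t)\,\Psi(\t),
\]
for a parameter $c\ge a$ to be fixed. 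By construction $\Psi\equiv 1\pmod{p^j}$, hence $\mathfrak g_j\equiv G\pmod{p^j}$, and $\mathfrak g_j$ is again an eta-quotient, holomorphic on the upper half-plane. Its $q$-order at $\infty$ equals that of $G$, namely $E_G/24$, and one checks from $p^a\mid\delta_i$ together with the hypothesis that $E_G\ge 0$, so no pole is introduced at $\infty$.

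The crux is to show that $\mathfrak g_j$ is holomorphic, and in fact vanishing, at every cusp, so that it is a cusp form. Writing $\mathfrak g_j=\prod_\delta\eta(\delta\t)^{n_\delta}$, Ligozat's formula shows that holomorphy at the cusp $a'/c'$ is governed by the sign of $\sum_\delta\gcd(c',\delta)^2 n_\delta/\delta$. The factors contributed by $\Psi$ convert each denominator eta $\eta(\gamma_i\t)^{-s_i}$ into the combination $\eta(\gamma_i\t)^{s_i(p^{c+j-1}-1)}\eta(p^c\gamma_i\t)^{-s_ip^{j-1}}$; at cusps with $p\nmid c'$ the large argument $p^c\gamma_i$ renders the negative contribution negligible, while the worst case occurs at cusps $c'$ divisible by all $p^c\gamma_i$. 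There the negative contribution is $p^{c+j-1}\sum_i\gamma_i s_i$, the added positive $\eta(\gamma_i\t)$ contribution is $(p^{c+j-1}-1)\sum_i\gamma_i s_i$, and, using $p^a\mid\delta_i$ and $p^a\mid c'$ so that $\gcd(c',\delta_i)^2\ge p^{2a}$, the numerator contributes at least $p^{2a}\sum_i r_i/\delta_i$. The total is therefore at least $p^{2a}\sum_i r_i/\delta_i-\sum_i\gamma_i s_i$, which is nonnegative precisely when $p^a\ge\sqrt{(\sum_i\gamma_i s_i)/(\sum_i r_i/\delta_i)}$. Thus the inequality in the theorem is exactly the condition forcing holomorphy at the worst cusp, and the divisibility $p^a\mid\mathcal D_G$ is exactly what produces the gain $p^{2a}$ there.

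I anticipate the main obstacle to be precisely this cusp analysis: one must verify nonnegativity of the order at all cusps simultaneously (not merely the extreme one sketched above), control the parameter $c$ and the resulting level, and confirm that $\mathfrak g_j$ genuinely transforms on a congruence subgroup $\Gamma_0(N')$ with a well-defined nebentypus, which via the Gordon--Hughes--Newman and Ligozat criteria may require enlarging the level or adjusting by a harmless factor so that the relevant congruences modulo $24$ hold, as well as verifying that the weight $\tfrac12\sum_\delta n_\delta$ is an integer (which follows from the integrality of $k$, after possibly adjusting the exponents of $\Psi$). A secondary point is cuspidality versus mere holomorphy at a cusp where the displayed inequality is an equality; in that boundary case the order is $0$ rather than positive, and one must argue, for instance by checking that the relevant constant term vanishes modulo $p^j$ or by absorbing an additional cusp-form factor, that the Eisenstein part does not obstruct applying Serre's theorem. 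Once $\mathfrak g_j$ is confirmed to be a cusp form of positive integer weight, the conclusion, including the $O(X/\log^{\alpha}X)$ bound, is immediate.
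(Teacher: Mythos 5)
Your proposal follows essentially the same route as the paper: multiply $G$ (rescaled by $\tau\mapsto 24\tau$ to satisfy the modularity congruences) by the eta-quotient $\prod_{i}\bigl(\eta(24\gamma_i\tau)^{p^{a}}/\eta(24p^{a}\gamma_i\tau)\bigr)^{s_ip^{j}}\equiv 1\pmod{p^{j+1}}$, verify holomorphy at every cusp of $\Gamma_0(576L^2)$ via Ligozat's order formula --- where the worst-case estimate reduces exactly to the stated inequality, using $p^a\mid\mathcal{D}_G$ for the $p^{2a}$ gain --- and invoke Serre's theorem. The one inessential difference is that the version of Serre's theorem used in the paper applies to all holomorphic forms of positive integer weight with integer coefficients, not only cusp forms, so your anticipated difficulty with cuspidality and the Eisenstein part at a boundary cusp does not arise.
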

\begin{remark}
For primes $p$ that do not satisfy the conditions of Theorem~\ref{thm1}, numerical data (e.g. see Section \ref{exs}) suggest that $1/p$ of the coefficients of $G(\tau)$ are divisible by $p$.
\end{remark}
\begin{remark}
If we apply Theorem~\ref{thm1} to the generating function for the $t$-regular partition function $G_t(\tau),$ we recover the result from \cite{gordon}.
\end{remark}
\begin{remark}
A deep theorem of Serre (see Theorem~\ref{Serre}) asserts that holomorphic integer weight modular forms with integer coefficients are lacunary modulo any positive integer. Since $\eta(\tau)$ is holomorphic and nonvanishing on the complex upper half plane $\mathbb{H}$, the same is true of functions of the form (\ref{BigEtaQuot}); however, these eta-quotients typically have poles at cusps. Therefore, Theorem~\ref{thm1} applies to many functions excluded by Serre's Theorem. 
\end{remark}
Here we give an application of Theorem~\ref{thm1}. We may represent a partition $\lambda_1+\cdots+\lambda_k$ with summands $\lambda_i$ written in nonincreasing order using a Ferrers diagram, which consists of $k$ rows of boxes with the $i$th row containing $\lambda_i$ boxes.
The \textit{hook length} of each box counts the number of boxes to the right of the given box and the number of boxes below the given box, plus $1$ for the box itself. Below we show the Ferrers diagram of $4+2+1$, with each box labeled with its hook length.
\[
\begin{Young}
6&4&2&1\cr
3&1\cr
1\cr
\end{Young}\]
Denote the set of all partitions as $\mathcal{P}$, the multi-set of hook lengths for $\lambda$ as $\mathcal{H}(\lambda)$, and let 
\begin{equation*}
\mathcal{H}_t(\lambda)=\{h\in \mathcal{H}(\lambda) : h\equiv 0 \pmod{t}\}.
\end{equation*}
Nekrasov and Okounkov \cite{nek-ok} established the identity 
\begin{equation*}
\sum\limits_{\lambda\in\mathcal{P}}q^{|\lambda|}\prod\limits_{h\in\mathcal{H}(\lambda)}\left(1-\frac{z}{h^2}\right)=\prod\limits_{n\geq 1} \left(1-q^n\right)^{z-1},
\end{equation*}
for $z\in \C$. This formula, which has significance in mathematical physics, combinatorics, and number theory, relates partition hook lengths to powers of $\eta(\tau)$. Han \cite{han} provided the $(t,y)$-extension 
\begin{equation*}\label{hansEq}
\sum_{\lambda\in\mathcal{P}}q^{|\lambda|}\prod\limits_{h\in\mathcal{H}_t(\lambda)}\left(y-\frac{tyz}{h^2}\right)=\prod\limits_{n\geq 1}\frac{(1-q^{tn})^t}{(1-(yq^t)^n)^{t-z}(1-q^n)},
\end{equation*}
where $y\in\C$ and $t$ is a positive integer.
One can verify that
\begin{equation*}
G_{1,t,z}(\tau) 
=q^{\tfrac{1-tz}{24}}\frac{\eta^z(t\tau)}{\eta(\tau)}, 
\qquad \text{and}\qquad
G_{-1,t,z}(\tau) 
=q^{\tfrac{1-tz}{24}}\frac{\eta^{2t-z}(t\tau)\eta^{t-z}(4t\tau)}{\eta(\tau)\eta^{3(t-z)}(2t\tau)}
\end{equation*}
(note that $G_{1,t,1}(\tau)$ is the generating function for $b_t(n)$). When $z$ is an odd integer greater than one and $\geq t$,
both $G_{1,t,z}(\tau)$ and $G_{-1,t,z}(\tau)$ are holomorphic positive integral weight modular forms, and hence Serre's theorem gives that these two functions are lacunary modulo any positive integer $M$. 
The following result addresses the case $z<t$.

\begin{cor}\label{cor2}
Suppose $z$ is an odd positive integer with $z<t$, and $p$ is a prime divisor of $t$.\\
1) If $p^a\mid t$ and $$p^a\geq\sqrt{\frac{t}{z}},$$ then $G_{1,t,z}(\tau)$ is lacunary modulo $p^j$ for any positive integer $j$. \\
2) If $p^a\mid t$ and $$p^a\geq 2\;\sqrt{\frac{t+6t^3-6t^2z}{9t-5z}},$$ then $G_{-1,t,z}(\tau)$ is lacunary modulo $p^j$ for any positive integer $j$.
\end{cor}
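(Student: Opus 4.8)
The plan is to derive both statements as direct specializations of Theorem~\ref{thm1}, so that the entire argument reduces to reading off the eta-quotient data $(\delta_i, r_i, \gamma_i, s_i)$ from the explicit product formulas for $G_{1,t,z}(\tau)$ and $G_{-1,t,z}(\tau)$, checking that the structural hypotheses of Theorem~\ref{thm1} hold, and then simplifying the resulting lower bound on $p^a$. Before invoking the theorem I would first confirm that each function has integer weight: in both cases a short computation gives weight $k = (z-1)/2$, which is a nonnegative integer precisely because $z$ is odd. Thus the hypothesis that $z$ be a positive odd integer is exactly what guarantees we may apply Theorem~\ref{thm1}.

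For part 1, I read off from $G_{1,t,z}(\tau) = q^{(1-tz)/24}\eta(t\tau)^z/\eta(\tau)$ the single numerator datum $\delta_1 = t$, $r_1 = z$ and the single denominator datum $\gamma_1 = 1$, $s_1 = 1$. Since there is a single numerator modulus, $\mathcal{D}_G = t$, so the divisibility hypothesis $p^a \mid \mathcal{D}_G$ becomes $p^a \mid t$, and the bound of Theorem~\ref{thm1} reads
\[
p^a \geq \sqrt{\frac{\gamma_1 s_1}{r_1/\delta_1}} = \sqrt{\frac{1}{z/t}} = \sqrt{\frac{t}{z}},
\]
which is exactly the claimed inequality; applying Theorem~\ref{thm1} then yields lacunarity modulo $p^j$.

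For part 2 the bookkeeping is longer but identical in spirit. From
\[
G_{-1,t,z}(\tau) = q^{(1-tz)/24}\frac{\eta(t\tau)^{2t-z}\,\eta(4t\tau)^{t-z}}{\eta(\tau)\,\eta(2t\tau)^{3(t-z)}}
\]
I record $\delta_1 = t$, $r_1 = 2t-z$, $\delta_2 = 4t$, $r_2 = t-z$ in the numerator and $\gamma_1 = 1$, $s_1 = 1$, $\gamma_2 = 2t$, $s_2 = 3(t-z)$ in the denominator. Here $\mathcal{D}_G = \gcd(t, 4t) = t$, so again the hypothesis is $p^a \mid t$. Computing the two sums gives $\sum_i \gamma_i s_i = 1 + 6t(t-z)$ and $\sum_i r_i/\delta_i = (2t-z)/t + (t-z)/(4t) = (9t-5z)/(4t)$; substituting these and clearing denominators turns the bound of Theorem~\ref{thm1} into
\[
p^a \geq \sqrt{\frac{4t\bigl(1+6t^2-6tz\bigr)}{9t-5z}} = 2\sqrt{\frac{t+6t^3-6t^2z}{9t-5z}},
\]
which matches the stated inequality.

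The only points requiring care — and where I expect the only real, if mild, obstacle to lie — are in verifying that all the structural hypotheses of Theorem~\ref{thm1} are genuinely met, rather than in the final arithmetic. Specifically, the exponents must be positive integers, which forces $2t-z > 0$ and $t-z > 0$; both follow from $z < t$, and it is here that the strict inequality is used. One must also check that the moduli $\{1, t, 2t, 4t\}$ appearing in part 2 are distinct, which holds once $t \geq 2$; but $z < t$ together with $z \geq 1$ forces $t \geq 2$, so this is automatic. Finally, I would note that the quantities under the radicals are positive — both $9t - 5z > 4t > 0$ and $1 + 6t(t-z) > 0$ follow from $z < t$ — so the bounds are well defined. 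With these checks in place, both parts follow immediately from Theorem~\ref{thm1}.
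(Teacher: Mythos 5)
Your proposal is correct and follows essentially the same route as the paper: both parts are obtained by reading off the eta-quotient data from the explicit product formulas, verifying the integer-weight and divisibility hypotheses, and simplifying the bound from Theorem~\ref{thm1}, with the arithmetic matching the paper's computations exactly. Your additional checks (positivity of the exponents $2t-z$, $t-z$, distinctness of the moduli, and positivity of the radicands) are sound and, if anything, slightly more careful than the paper's own write-up.
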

Theorem~\ref{thm1} is a special case of a more general theorem regarding \textit{generalized eta-functions} (see \cite{robins}). Define the generalized eta-function $\eta_{\delta,g}(\tau)$, with  $\delta,g\in\Z$, by
\begin{equation*}
\eta_{\delta,g}(\tau):=e^{\pi iP_2(\frac{g}{\delta})\delta\tau}\prod_{\substack{n> 0 \\ n\equiv g\pmod{\delta}}}(1-q^{n})\prod_{\substack{n>0 \\ n\equiv -g\pmod{\delta}}}(1-q^n),
\end{equation*}
where $P_2(x):=(x-\lfloor x\rfloor)^2-(x-\lfloor x\rfloor)+\tfrac{1}{6}$ is the second Bernoulli polynomial.  
Note that when $g=0,\frac{\delta}{2},$ we can express the generalized eta-function as the eta-quotient
\begin{equation}\label{relating getEta and regEta}
\eta_{\delta,0}(\tau)=\eta(\delta \tau)^2, \eta_{\delta,\frac{\delta}{2}}(\tau)=\frac{\eta(\frac{\delta}{2}\tau)^2}{\eta(\delta \tau)^2}, 
\end{equation}
respectively. Otherwise the generalized eta-function is a meromorphic modular form of weight zero (i.e. a modular function) on a congruence subgroup, and all generalized eta-functions are holomorphic on $\mathbb{H}.$ 
The relationships in \ref{relating getEta and regEta} as well as the following relationship will be important later on
\begin{equation}\label{n delta n g}
    \eta_{n\delta, ng}(\tau) = \eta_{\delta, g}(n\tau) \text{ for $n \in \Z$.}
\end{equation}

In analogy with eta-quotients, we define the generalized eta-quotient  by
\begin{equation} \label{H}
H(\tau):=\frac{\eta^{r_1}_{\delta_1,g_1}(\tau)\cdots\eta^{r_{u}}_{\delta_{u},g_u}(\tau)}{\eta^{s_1}_{\gamma_1,h_1}(\tau)\cdots\eta^{s_t}_{\gamma_t,h_t}(\tau)} \cdot \frac{\eta^{r_1'}_{\delta_1',\delta_1'/2}(\tau)\cdots\eta^{r_v'}_{\delta_v',\delta_v'/2}(\tau)}{\eta^{s_1'}_{\gamma_1',\gamma_1'/2}(\tau)\cdots\eta^{s_x'}_{\gamma_x',\gamma_x'/2}(\tau)}\cdot \frac{\eta^{r_1''}_{\delta_1'',0}(\tau)\cdots\eta^{r_w''}_{\delta_w'',0}(\tau)}{\eta^{s_1''}_{\gamma_1'',0}(\tau)\cdots\eta^{s_y''}_{\gamma_y'',0}(\tau)},
\end{equation}
where the $r_i$, $r_i'$, $r_i''$, $s_i$, $s_i'$, and $s_i''$ are non-negative and $r_i,s_i\in \Z$, $r_i',r_i'',s_i',s_i''\in\tfrac{1}{2}\Z$.
Furthermore, $h_i, g_i, \gamma_i',\delta_i'\neq 0$, $g_i\neq \delta_i/2$, and $h_i\neq \gamma_i/2$, so that this expression is unique for all generalized eta-functions. 
Then    $H(\tau) = q^{E_H}\sum\limits_{n=0}^{\infty}c(n)q^n$
where 
$${\tiny E_H: = \tfrac{1}{2}\left(\sum_{i=1}^u \delta_iP_2\left(\tfrac{g_i}{\delta_i}\right)r_i - \sum_{i=1}^t \gamma_i P_2\left(\tfrac{h_i}{\gamma_i}\right)s_i - \sum_{i=1}^v\tfrac{\delta_i'r_i'}{12} +\sum_{i=1}^x\tfrac{\gamma_i's_i'}{12} + \sum_{i=1}^w\tfrac{\delta_i''r_i''}{6} - \sum_{i=1}^y\tfrac{\gamma_i''s_i''}{6}\right)}$$

For example, the generalized eta-quotient
\begin{equation*}
\frac{\eta_{5,2}(\tau)}{\eta_{5,1}(\tau)}
= q^{\frac{1}{5}}\prod_{n=0}^{\infty}\frac{(1-q^{5n+1})(1-q^{5n+4})}{(1-q^{5n+2})(1-q^{5n+3})}.
\end{equation*}
is related to the famous Rogers-Ramanujan identities \cite{berndt}*{p. 158} 
\begin{equation*}
\sum_{i=1}^{\infty}\frac{q^{i^2}}{(1-q)\cdots(1-q^i)}=\prod_{n=0}^{\infty}\frac{1}{(1-q^{5n+1})(1-q^{5n+4})} 
\end{equation*}
and
\begin{equation*}
\sum_{i=1}^{\infty}\frac{q^{i^2+i}}{(1-q)\cdots(1-q^i)}=\prod_{n=0}^{\infty}\frac{1}{(1-q^{5n+2})(1-q^{5n+3})}.
\end{equation*}

\begin{remark}
By the identities in (\ref{relating getEta and regEta}) and since  $r_i',r_i''\in\tfrac{1}{2}\Z$ , 
all eta-quotients of the form (\ref{BigEtaQuot}) can be expressed as generalized eta-quotients of the form (\ref{H}). For the rest of this paper, ``eta-quotient'' will refer to those of the form (\ref{BigEtaQuot}) while ``generalized eta-quotient'' will refer to those of the form (\ref{H}).
\end{remark}
\noindent
For such a generalized eta-quotient, define
\begin{equation*}
\mathcal{D} _H: = \gcd(\delta_1,\ldots, \delta_u, \delta_1', \ldots, \delta_v',\delta_1'',\ldots, \delta_w'', \tfrac{\gamma_1'}{2}, \ldots, \tfrac{\gamma_x'}{2}).
\end{equation*}


\begin{theorem}\label{thm3}
Suppose $H(\tau)$ is a generalized eta-quotient of the form (\ref{H}), with
\begin{equation*}
-\frac{1}{2}\sum\limits_{i=1}^u\delta_ir_i+\sum\limits_{i=1}^w\frac{r_i''}{\delta_i''}+\frac{1}{2}\sum\limits_{i=1}^v\frac{r_i'}{\delta_i'}-\frac{1}{2}\sum\limits_{i=1}^x\gamma_i's_i' > 0.
\end{equation*}
If $p$ is a prime divisor of $\mathcal{D}_H$ such that $p^a\mid \mathcal{D}_H$ and
\begin{equation*}
p^a\geq \sqrt{\frac{\sum\limits_{i=1}^t\gamma_is_i +\frac{1}{2}\sum\limits_{i=1}^x\gamma_i's_i'+\sum\limits_{i=1}^y\gamma_i''s_i''+\sum\limits_{i=1}^v\delta_i'r_i'}{-\frac{1}{2}\sum\limits_{i=1}^u\delta_ir_i+\frac{1}{2}\sum\limits_{i=1}^v\frac{r_i'}{\delta_i'}+\sum\limits_{i=1}^w\frac{r_i''}{\delta_i''}-\frac{1}{2}\sum\limits_{i=1}^x\gamma_i's_i'}},
\end{equation*}
then $H(\tau)$ is lacunary modulo $p^j$ for any positive integer $j.$ Moreover, there exists a positive constant $\alpha$ depending on $p$ and $j$ such that the number of integers $n\leq X$ with $p^j$ not dividing $c(n)$ is  $O\left(\frac{X}{\log^\alpha X}\right)$.
\end{theorem}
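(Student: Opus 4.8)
The plan is to adapt the proof of Theorem~\ref{thm1} to the generalized setting, the one genuinely new ingredient being a ``level-raising'' congruence for generalized eta-functions. Starting from the Frobenius congruence $(1-q^n)^{p}\equiv 1-q^{np}\pmod{p}$, applied to each factor in the product defining $\eta_{\delta,g}$ and iterated, together with the identity $P_2\!\left(\tfrac{p^m g}{p^m\delta}\right)=P_2\!\left(\tfrac{g}{\delta}\right)$ that matches the exponential prefactors, I would first record the congruence
\begin{equation*}
\eta_{\delta,g}(\tau)^{p^m}\equiv \eta_{p^m\delta,\,p^m g}(\tau)\pmod{p^m},
\end{equation*}
valid for all $m\ge 1$, together with its specializations $\eta(\delta\tau)^{p^m}\equiv\eta(p^m\delta\tau)\pmod{p^m}$ governing the factors $\eta_{\delta,\frac{\delta}{2}}$ and $\eta_{\delta,0}$. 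Since enlarging $m$ only sharpens the modulus, I am free to take $m$ as large as needed relative to the target exponent $j$.

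I would then construct, for each fixed $j$, a holomorphic modular form congruent to $H$ modulo $p^j$. For each factor $f$ appearing in a denominator of \eqref{H}, I write $f^{-s}=f^{s(p^m-1)}\cdot\big(f^{p^m}\big)^{-s}$ and use the congruences above to replace $f^{p^m}$ by the factor whose argument is scaled up by $p^m$; the outcome is a generalized eta-quotient $\widetilde H$ with $\widetilde H\equiv H\pmod{p^j}$. The hypothesis $p^a\mid\mathcal D_H$ guarantees that every argument surviving in a numerator ($\delta_i$, $\delta_i'$, $\delta_i''$, and $\tfrac{\gamma_i'}{2}$) is divisible by $p^a$, which is exactly what is needed to bound the relevant greatest common divisors from below at the cusps in the next step. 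The scaling raises the weight of $\widetilde H$ to a positive integer; the half-integral-weight case, which occurs only for $p=2$, I would dispose of by passing to an even power or by invoking the half-integral-weight form of Serre's theorem.

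The heart of the argument is to check that $\widetilde H$ is holomorphic at every cusp of the relevant group $\Gamma_1(N)$. For this I would use the order formulas for generalized eta-functions at cusps \cite{robins}: the contribution of $\eta_{\delta,g}$ at a cusp of denominator $c$ is a fixed multiple of $\tfrac{\gcd(c,\delta)^2}{\delta}\,P_2(\cdot)$, and summing these over all factors gives the order of $\widetilde H$ at each cusp. The scaled denominators produce their deepest pole at a cusp whose denominator is divisible by $p^m$, and there the order of $\widetilde H$ equals, up to a positive constant, $p^{2a}D-N$, where $N$ and $D$ are exactly the numerator and denominator of the radicand in the statement. Non-negativity of this order is thus equivalent to $p^{2a}\ge N/D$, i.e.\ the displayed bound, while the positivity hypothesis is precisely the statement $D>0$, which both makes the bound well-defined and ensures that holomorphy at this cusp is attainable; holomorphy at the remaining cusps holds automatically once $p^m$ is large, as there the numerator contributions dominate. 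Finally, $\widetilde H$ is a holomorphic modular form of positive integer weight on $\Gamma_1(N)$ with integer coefficients, so Serre's theorem shows $\widetilde H$, and hence $H$, is lacunary modulo $p^j$; the quantitative bound $O\!\left(\tfrac{X}{\log^\alpha X}\right)$ comes from the effective form of that theorem.

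The step I expect to be the main obstacle is the cusp analysis of the factors $\eta_{\delta,g}$ with $g\neq 0,\tfrac{\delta}{2}$. Unlike ordinary eta-quotients these are weight-zero modular functions carrying a nontrivial multiplier, so tracking their orders at every cusp through the Bernoulli-polynomial formula, confirming that the cusp with $p^m$-divisible denominator is genuinely the binding one, and checking that the multiplier systems of all factors combine into a consistent character on $\Gamma_1(N)$, is where the real work lies. A secondary point is to make the heuristic order ``$p^{2a}D-N$'' precise by carrying all normalizing constants and $P_2$-values through the computation, thereby confirming that the contributions assemble into exactly the sums $N$ and $D$ of the theorem.
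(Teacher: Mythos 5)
Your high-level architecture matches the paper's: multiply $H(\widetilde N\tau)$ by a generalized eta-quotient congruent to $1$ modulo $p^{j+1}$, verify that the product is a holomorphic positive-integer-weight form on $\Gamma_1(N)$ by checking cusp orders via the Bernoulli-polynomial formula of \cite{robins}, and finish with Serre's theorem. However, your choice of correction factor has a gap I do not think is repairable as stated. You replace each denominator factor $\eta_{\gamma,h}^{-s}$ by $\eta_{\gamma,h}^{s(p^m-1)}\cdot\eta_{p^m\gamma,\,p^mh}^{-s}$, i.e.\ you use a ``same-index, level-raised'' congruence. The paper instead multiplies by $\bigl(\eta_{\gamma,0}^{p^a}/\eta_{p^a\gamma,0}\bigr)^{sp^j}$, resetting the index to $0$ regardless of $h$. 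This is not cosmetic: the cusp order of $\eta_{\gamma,h}$ is proportional to $\tfrac{(\gamma,\epsilon)^2}{\gamma\epsilon}P_2\bigl(\tfrac{\lambda h}{(\gamma,\epsilon)}\bigr)$, and for $h\not\equiv 0$ this $P_2$-value can be as negative as $-\tfrac{1}{12}$. Your numerator factor $\eta_{\gamma,h}^{s(p^m-1)}$ then acquires a pole whose depth grows linearly in $p^m$ at any cusp where that $P_2$ is negative, and the term from $\eta_{p^m\gamma,p^mh}^{-s}$ cannot cancel it. So ``take $m$ as large as needed'' makes the form less holomorphic, not more, and your claim that holomorphy at the non-binding cusps is automatic fails. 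The paper's index-$0$ factor avoids this entirely: since $P_2(0)=\tfrac{1}{6}>0$ and $(p^a\gamma,\epsilon)^2\le p^{2a}(\gamma,\epsilon)^2$, the correction factor has non-negative order at every cusp, so raising it to the power $sp^j$ only adds zeros.

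Two further problems. First, the congruence $\eta_{\delta,g}^{p^m}\equiv\eta_{p^m\delta,p^mg}\pmod{p^m}$ is false for $m\ge 2$: iterating the Frobenius congruence gives $(1-X)^{p^2}\equiv(1-X^p)^p\pmod{p^2}$, not $1-X^{p^2}\pmod{p^2}$. The usable statement, proved in Lemmas~\ref{induction} and~\ref{general f}, is $\bigl(\eta_{\delta,0}^{p^a}(\widetilde N\tau)/\eta_{p^a\delta,0}(\widetilde N\tau)\bigr)^{p^j}\equiv 1\pmod{p^{j+1}}$; the exponent deepening the modulus ($p^j$) must be kept separate from the one raising the level ($p^a$). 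Second, your assertion that the order at the binding cusp is a positive multiple of $p^{2a}D-N$, with $N,D$ exactly the numerator and denominator of the radicand, is the entire content of the theorem and is not derived; in the paper it emerges from a three-term estimate using $-\tfrac{1}{12}\le P_2\le\tfrac{1}{6}$ and $1\le(\delta,\epsilon)\le\delta$, and that computation is also where the numerator factors $\eta_{\delta_i',\delta_i'/2}^{r_i'}$ must themselves be corrected (each hides an $\eta(\delta_i'\tau)$ in its denominator, which is why $\sum\delta_i'r_i'$ appears in the radicand's numerator) --- a point your prescription of correcting only the factors appearing in a denominator of (\ref{H}) would miss.
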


\begin{remark}
An eta-quotient can be expressed using the generalized eta-quotient form in (\ref{H}) using the $\eta_{\delta_i,0}(\tau)$ or $\eta_{\delta_i, \delta_i/2}(\tau)$ terms (see (\ref{relating getEta and regEta})). If expressed using only the $\eta_{\delta_i,0}$ terms, applying Theorem~\ref{thm3} recovers the results of Theorem~\ref{thm1}. If, however, the expression uses $\eta_{\delta_i, \delta_i/2}(\tau)$ terms, then applying  Theorem~\ref{thm3} gives a weaker bound than the one given by  Theorem~\ref{thm1}. This discrepancy comes from the weaker bounds for the second Bernoulli polynomial $P_2(x)$. 
\end{remark}


In Section~\ref{prelim}, we give preliminaries on modular forms and include a discussion of Serre's theorem on the lacunarity of holomorphic positive integer weight modular forms with integer coefficients. We utilize this theorem in Section~\ref{Proofs1}, where we provide background on Dedekind eta-quotients as well as the proofs of Theorem~\ref{thm1} and Corollary~\ref{cor2}. In Section~\ref{Proofs2} we again make use of the theorem of Serre for the proof of Theorem~\ref{thm3}. Lastly in Section~\ref{exs}, we conclude with specific examples of functions that empirically demonstrate our theorems as well as the potentially slow convergence of $\delta(G,p;X)$ when $G(\tau)$ is lacunary modulo $p$.

\section*{Acknowledgements}
\noindent
The authors would like to thank Ken Ono and Hannah Larson for advising this project, and for their many helpful conversations and suggestions. The authors also thank Emory University, the Templeton World Charity Foundation and the NSF for their support via grant DMS-1557690.

\section{Preliminaries}\label{prelim}

\subsection{Modular Forms}
For $N$ a positive integer, define the level $N$ \textit{congruence subgroups}
$\Gamma_0(N)$ and $\Gamma_1(N)$  of $\SL_2(\mathbb{Z})$ \cite[p. 1]{ono} by 
\begin{align*}
	\Gamma_0(N)&:=\left\{ \begin{pmatrix}
a & b\\
c & d
\end{pmatrix}\in \mathrm{SL}_2(\Z): c\equiv 0 \pmod{N} \right\},\\
\\
\Gamma_1(N)&:=\left\{ \begin{pmatrix}
a & b\\
c & d
\end{pmatrix}\in \mathrm{SL}_2(\Z): c\equiv 0 \pmod{N}, \ a\equiv d\equiv 1 \pmod{N} \right\}.\\
\end{align*}
Let $f(\tau)$ be a meromorphic function on $\mathbb{H}$. For $\gamma=\begin{pmatrix}
a&b\\c&d
\end{pmatrix}\in \SL_2(\Z)$ and  $k\in\Z$ define the weight-$k$ ``slash" operator $|_k$ by 
\begin{equation*}
(f|_k\gamma)(\tau):=(\det{\gamma})^{\frac{k}{2}}(c\tau+d)^{-k}f(\tfrac{a\tau+b}{c\tau+d}).
\end{equation*}
Then $f$ is a \textit{meromorphic modular form of weight k} on a congruence subgroup $\Gamma$, i.e.\textit{ is modular on $\Gamma$}, if (1) for all $\gamma\in \Gamma$ we have $(f|_k\gamma)(\tau)=f(\tau)$, and (2) for any $\gamma\in\SL_2(\Z)$, $(f|_k\gamma)(\tau)$ has a Fourier expansion of the form $$ (f|_k\gamma)(\tau)=\sum\limits_{n\geq n_{\gamma}}a_{\gamma}(n)q_N^n,$$ with $q_N:=e^{2\pi i\tau/N}$ and $a_{\gamma}(n_{\gamma})\neq 0$  \cite[p. 3]{ono}. 

Define a \textit{cusp of $\Gamma$} to be an equivalence class of $\mathbb{P}^1(\Q)=\Q\cup\{\infty\}$ under the action of $\Gamma$. We say that $f(\tau)$ is a \textit{holomorphic modular form} on $\Gamma$ if it is holomorphic on $\mathbb{H}$ and at all the cusps of $\Gamma$ (the latter occurring if $n_{\gamma}\geq 0$ for every $\gamma$). 
The quantity $n_{\gamma}$, called the \textit{order of vanishing at $\frac{-d}{c}$}, depends only on the equivalence class of the cusp $\frac{-d}{c}$. 
The following proposition gives complete sets of representatives for the cusps of $\Gamma_0(N)$ and $\Gamma_1(N)$ (see \cite{diamond}*{p. 99} and \cite{robins}).

\begin{prop}\label{ineqCusps}
Let 
\begin{equation*}\label{cusps}
C_0(N):=\left\{\tfrac{c}{d}: d\mid N, (c,N)=1 \right\},
\end{equation*}
where $c$ runs through a complete residue system modulo $\left(d,\tfrac{N}{d}\right)$, and
\begin{align*}
C_1(N):&=\left\{\tfrac{\lambda}{\mu \epsilon}: \epsilon \mid N, \ 1\leq \lambda,\mu\leq N, \ (\mu,\lambda )=(\lambda, N)=(\mu, N)=1\right\}.
\end{align*}
 Then $C_0(N)$ (resp. $C_1(N)$) is a complete set of representatives of the cusps on $\Gamma_0(N)$ (resp. $\Gamma_1(N)$) Moreover, $C_0(N)$ is minimal.
\end{prop}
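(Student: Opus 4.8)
The plan is to realize the cusps as the orbit space $\Gamma\backslash\mathbb{P}^1(\Q)$ and to produce explicit invariants of each orbit. Since $\SL_2(\Z)$ acts transitively on $\mathbb{P}^1(\Q)$, every cusp has a representative $\frac{a}{c}$ written in lowest terms (with the convention $\infty=\frac{1}{0}$), so the problem reduces to deciding when two such reduced fractions lie in the same $\Gamma$-orbit.

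For $C_0(N)$, I would first compute the action directly: for $\gamma=\begin{pmatrix}\alpha&\beta\\\kappa&\delta\end{pmatrix}\in\Gamma_0(N)$ one has $\gamma\cdot\frac{a}{c}=\frac{\alpha a+\beta c}{\kappa a+\delta c}$, and since $N\mid\kappa$ the new denominator is $\equiv\delta c\pmod N$. From this I extract the first invariant, $d:=\gcd(c,N)$, and show it depends only on the orbit. Next I would prove surjectivity: given $\frac{a}{c}$, solving $\kappa a+\delta c=d$ with $N\mid\kappa$ (possible because $\gcd(a,c)=1$ and $\gcd(c/d,N/d)=1$) produces $\gamma\in\Gamma_0(N)$ carrying $\frac{a}{c}$ to a fraction with denominator exactly $d\mid N$; a translation $\tau\mapsto\tau+1$ changes the numerator by multiples of $d$, and the Chinese Remainder Theorem then lets me choose the numerator coprime to all of $N$, landing in $C_0(N)$.

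The heart of the argument, and the step I expect to be most delicate, is the inequivalence/finer-invariant direction: showing that $\frac{a_1}{d}$ and $\frac{a_2}{d}$ (same denominator $d\mid N$, numerators coprime to $N$) are $\Gamma_0(N)$-equivalent if and only if $a_1\equiv a_2\pmod{\gcd(d,N/d)}$. Here one writes out $\gamma\cdot\frac{a_2}{d}=\frac{a_1}{d}$, matches denominators to force $\kappa a_2+\delta d=\pm d$ with $N\mid\kappa$ (so that $\delta\equiv\pm 1\pmod{N/d}$), and then uses $\det\gamma=1$, giving $\alpha\delta\equiv 1\pmod N$, to reduce the numerator relation $a_1\equiv\alpha a_2\pmod d$ modulo $\gcd(d,N/d)$, where $\delta$ and hence $\alpha$ become trivial. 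The bookkeeping that the unit $\delta$ ranges over exactly the residues $\equiv 1\pmod{N/d}$ — and the check that distinct classes modulo $\gcd(d,N/d)$ really do give distinct orbits — is where all the combinatorial content lives, and it is what yields the count $\sum_{d\mid N}\varphi(\gcd(d,N/d))$ matching $C_0(N)$.

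For $C_1(N)$, the claim is only that the listed fractions form a complete (not necessarily irredundant) set, so it suffices to prove surjectivity. Since $\Gamma_1(N)\subseteq\Gamma_0(N)$, each $\Gamma_0(N)$-orbit breaks into finitely many $\Gamma_1(N)$-orbits distinguished by the additional data of the numerator modulo $N$ up to sign, reflecting the conditions $a\equiv d\equiv 1\pmod N$ rather than merely $\alpha\delta\equiv 1\pmod N$. I would run the same reduction as above but retain this extra numerator datum, writing the representative as $\frac{\lambda}{\mu\epsilon}$ with $\epsilon\mid N$ and the stated coprimality conditions on $\lambda,\mu$; the bounds $1\le\lambda,\mu\le N$ come from reducing these residues modulo $N$. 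As this set is permitted to contain repetitions, no inequivalence argument is needed, and the result follows by matching each reduced cusp to an element of the displayed list (cf. \cite{robins}).
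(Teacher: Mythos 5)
The paper does not actually prove this proposition: it is quoted from the literature, with the $\Gamma_0(N)$ statement cited to Diamond--Shurman and the $\Gamma_1(N)$ statement to Robins, so there is no in-paper argument to compare against. Your outline is precisely the standard argument those references carry out --- use transitivity of $\SL_2(\Z)$ on $\mathbb{P}^1(\Q)$ to reduce to lowest-terms fractions, identify $d=\gcd(c,N)$ as a $\Gamma_0(N)$-orbit invariant, and pin down the finer invariant given by the numerator modulo $\gcd(d,N/d)$ --- and it is correct in its essentials. Two places where a written-out version needs more care than the sketch suggests. First, in the surjectivity step, after solving $(N/d)\kappa' a+\delta(c/d)=1$ you must still arrange $\gcd(N\kappa',\delta)=1$ so that $(\kappa,\delta)$ really is the bottom row of an element of $\Gamma_0(N)$; this requires adjusting $\delta$ within its residue class $\delta_0+(N/d)a\,\Z$ to be coprime to $N$, by the same Chinese Remainder trick you invoke for the numerator. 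Second, completeness of $C_0(N)$ uses both directions of your ``if and only if'': not only that distinct classes modulo $\gcd(d,N/d)$ give distinct orbits (the direction you detail), but also the converse construction showing that congruent numerators yield $\Gamma_0(N)$-equivalent cusps, which is what lets you move an arbitrary reduced cusp onto the chosen representative in the finite list. For $C_1(N)$ you correctly observe that only surjectivity is needed since repetitions are allowed; I would only note that the invariant separating $\Gamma_1(N)$-suborbits is the pair $(c\bmod N,\ a\bmod \gcd(c,N))$ up to simultaneous sign, not the numerator modulo $N$ alone, though this imprecision does not affect the completeness claim.
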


\subsection{Serre's Theorem}
The following deep theorem of Serre \cites{serre,mahlburg}, which is proved using the theory of Galois representations, is essential for the proofs of Theorems~\ref{thm1} and~\ref{thm3}.

\begin{theorem}[Serre]\label{Serre}
Let $f(\tau)$ be a holomorphic modular form of positive integer weight with Fourier expansion
$$ f(\tau)=\sum_{n=0}^{\infty}c(n)q^n,$$
where $c(n)$ is an integer for all $n.$ If $M$ is a positive integer, then there exists a positive constant $\alpha$ such that the number of integers $n\leq X$ with $M$ not dividing $c(n)$ is  $O\left(\frac{X}{\log^\alpha X}\right)$.
\end{theorem}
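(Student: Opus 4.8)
The plan is to reduce the statement to a prime-power modulus and then split off the Eisenstein part, handling the cuspidal part with Galois representations and Chebotarev's theorem, which are precisely the ingredients signalled by the fact that the result is proved via Galois representations. Since the Chinese Remainder Theorem lets us detect divisibility by $M=\prod_\ell \ell^{j_\ell}$ one prime at a time, and a finite union of exceptional sets each of size $O(X/\log^\alpha X)$ is again of that size (take the smallest $\alpha$), it suffices to fix a prime $\ell$ and an exponent $j$ and bound $\#\{n\le X: \ell^j\nmid c(n)\}$. Writing $f=E+g$ as the sum of its Eisenstein part $E$ and its cuspidal part $g$ in $M_k(\Gamma_1(N))$, the exceptional set for $f$ is contained in the union of the exceptional sets for $E$ and for $g$, so I would bound each summand separately.

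For the Eisenstein part the argument is elementary. The coefficients of $E$ are, up to a fixed normalization, linear combinations of twisted divisor sums $\sigma_{k-1,\chi,\psi}(n)=\sum_{d\mid n}\psi(d)\chi(n/d)d^{k-1}$, which are multiplicative in $n$. The strategy is to produce, for each relevant character pair, a positive-density set of primes $p$ together with a prescribed local exponent so that whenever $p$ divides $n$ to exactly that power the corresponding Euler factor, and hence $\sigma_{k-1,\chi,\psi}(n)$, is $\equiv 0\pmod{\ell^j}$. Concretely one solves a congruence such as $p^{k-1}\equiv -1\pmod{\ell^j}$ (or, when $-1$ is not an appropriate power, a higher congruence forcing $\ell^j\mid\frac{p^{(k-1)(a+1)}-1}{p^{k-1}-1}$), which by Dirichlet's theorem holds for a positive proportion of primes. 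The integers $n\le X$ avoiding every such local pattern then form a set of size $O(X/\log^\alpha X)$ by a standard Landau/Wirsing-type count, with $\alpha$ the density of the chosen set of primes.

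The cuspidal part is the heart of the argument. First I would reduce to the case that $g$ is a normalized Hecke eigenform: $S_k(\Gamma_1(N))$ has a basis of eigenforms whose Fourier coefficients are algebraic integers in a number field $K_g$, and an integer is divisible by $\ell^j$ exactly when it is suitably small at every prime $\mathfrak l\mid\ell$ of $K_g$, so it suffices to control each eigenform's coefficients modulo the primes above $\ell$. For such an eigenform Deligne attaches a continuous representation $\rho_{g,\mathfrak l}\colon \mathrm{Gal}(\overline{\Q}/\Q)\to \mathrm{GL}_2(\mathcal O_{\mathfrak l})$ with $\mathrm{tr}\,\rho_{g,\mathfrak l}(\mathrm{Frob}_p)=c(p)$ for $p\nmid N\ell$. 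Reducing modulo $\mathfrak l^{j}$ gives a representation with finite image $\mathcal G\subset \mathrm{GL}_2(\mathcal O_{\mathfrak l}/\mathfrak l^{j})$, so by the Chebotarev density theorem the Frobenius classes equidistribute in $\mathcal G$, and the primes $p$ with $c(p)\equiv 0\pmod{\mathfrak l^j}$ have density equal to the proportion of trace-zero elements of $\mathcal G$. By Hecke multiplicativity, $c(p)\equiv 0$ together with $p\,\|\,n$ forces $c(n)=c(p)\,c(n/p)\equiv 0\pmod{\mathfrak l^j}$; counting the $n\le X$ that are divisible to exactly the first power by no prime in this positive-density set again yields an $O(X/\log^\alpha X)$ bound, and intersecting over the finitely many $\mathfrak l\mid\ell$ preserves this shape.

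The main obstacle is guaranteeing that the proportion of trace-zero elements in the image $\mathcal G$ is \emph{positive} for every eigenform, since the entire argument collapses if this density vanishes. This is exactly where one must invoke the structure theory of the image: for non-CM forms the image of $\rho_{g,\mathfrak l}$ is open in $\mathrm{GL}_2(\mathcal O_{\mathfrak l})$ (Serre, Ribet), so the trace-zero locus has positive Haar measure and hence positive density, while for CM forms the representation is induced from a Hecke character and $c(p)=0$ for the positive-density set of primes inert in the CM field, which is even more favorable. One must also make the resulting density, and therefore $\alpha$, uniform over the finitely many eigenforms and primes $\mathfrak l\mid\ell$ in play, and check that the counting bound survives the passage from divisibility in $\mathcal O_{K_g}$ to divisibility in $\Z$; these are the technical points that Serre's Galois-theoretic analysis resolves.
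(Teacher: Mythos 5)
The paper does not prove this statement; it is quoted verbatim as a known theorem of Serre (with references to Serre's papers and to Mahlburg's exposition) and used as a black box in the proofs of Theorems~\ref{thm1} and~\ref{thm3}. So there is no internal proof to compare against, and your proposal should be judged as a reconstruction of Serre's original argument. As such, it is essentially the right outline: reduction to a prime-power modulus, separation of the Eisenstein and cuspidal parts, elementary treatment of twisted divisor sums for the former, Deligne's Galois representations plus Chebotarev plus a Landau--Wirsing count for the latter. Two points deserve flagging. First, the step ``reduce to the case that $g$ is a normalized Hecke eigenform'' glosses over an $\ell$-integrality issue: the eigenform decomposition of an integral cusp form has algebraic coefficients with denominators, so controlling each eigenform modulo $\mathfrak l^{j}$ does not immediately control $g$ modulo $\ell^{j}$; Serre circumvents this by working with the Hecke algebra over $\Z_\ell$ (a semi-local ring) and attaching representations to its quotients, rather than literally diagonalizing. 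Relatedly, for weight $1$ one needs Deligne--Serre, not just Deligne. Second, your argument for the positivity of the trace-zero proportion is heavier than necessary and slightly off as written: the literal trace-zero locus in an open subgroup of $\mathrm{GL}_2(\mathcal O_{\mathfrak l})$ has Haar measure zero (it is a hypersurface); what you need is the open condition $\mathrm{tr}\equiv 0 \pmod{\mathfrak l^{j}}$, and its nonemptiness in the image follows for free from the oddness of the representation --- complex conjugation maps to an element of trace exactly $0$ --- with no appeal to open-image theorems or the CM dichotomy. The analogous parity observation (evaluating the character $p\mapsto\chi(p)\psi(p)p^{k-1}$ at $-1$ gives $-1$) is what guarantees solvability of the congruence you need in the Eisenstein case. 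With these repairs your sketch matches Serre's proof.
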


\begin{remark}
Serre proves a more general result addressing the case where $c(n)$ lie in the ring of integers of an algebraic number field. 
\end{remark}

\section{Proof of Theorem~\ref{thm1}}\label{Proofs1}
\noindent
We begin by recalling results regarding Dedekind eta-quotients which will be necessary in the proofs of Theorem~\ref{thm1} and Corollary~\ref{cor2}.

\subsection{Dedekind Eta-quotients}
The following theorems (see \cite{ono}*{p. 18}) give conditions for an eta-quotient to be a modular form on $\Gamma_0(N)$. 
\begin{theorem}\label{ModEta}
If $f(\tau)=\prod\limits_{\delta \mid N}\eta(\delta\tau)^{r_{\delta}}$ is an eta-quotient with $k=\frac{1}{2}\sum\limits_{\delta |N} r_{\delta}\in\Z$ such that
\begin{equation*}
\sum_{\delta \mid N} \delta r_\delta\equiv 0 \pmod{24},
\end{equation*}
and
\begin{equation*}
\sum_{\delta \mid N} \tfrac{N}{\delta} r_\delta \equiv 0 \pmod{24},
\end{equation*}
then 
\begin{equation*}
f\left(\frac{a\tau+b}{c\tau+d}\right)=\chi(d) (c\tau+d)^kf(\tau)
\end{equation*}
for all $\begin{pmatrix}
a & b\\
c & d
\end{pmatrix}\in\Gamma_0(N),$ where $\chi(d):=\left(\frac{(-1)^ks}{d}\right)$ with $s:=\prod\limits_{\delta |N}\delta^{r_{\delta}}$. 
\end{theorem}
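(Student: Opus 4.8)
The plan is to derive the transformation law one eta-factor at a time from the classical transformation formula for the Dedekind eta-function under $\SL_2(\Z)$, and then to match the accumulated root-of-unity multiplier with the character $\chi(d)$ using the two divisibility-by-$24$ hypotheses. First I would recall the explicit multiplier: for $\gamma=\left(\begin{smallmatrix}a&b\\c&d\end{smallmatrix}\right)\in\SL_2(\Z)$ with $c>0$ one has $\eta(\gamma\tau)=\varepsilon(a,b,c,d)\,(c\tau+d)^{1/2}\eta(\tau)$, where $\varepsilon(a,b,c,d)$ is an explicit $24$th root of unity built from the Dedekind sum $s(d,c)$; the case $c=0$ reduces to translations and is handled directly from $\eta(\tau+1)=e^{\pi i/12}\eta(\tau)$, where the first hypothesis $\sum_{\delta\mid N}\delta r_\delta\equiv 0\pmod{24}$ forces invariance.

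Second, for a fixed $\gamma\in\Gamma_0(N)$ and each $\delta\mid N$, the key observation is that $\delta\mid N\mid c$, so the matrix $\gamma_\delta:=\left(\begin{smallmatrix}a&b\delta\\c/\delta&d\end{smallmatrix}\right)$ has integer entries and determinant $ad-bc=1$, i.e. $\gamma_\delta\in\SL_2(\Z)$; a short computation gives $\delta(\gamma\tau)=\gamma_\delta(\delta\tau)$, and since $(c/\delta)(\delta\tau)+d=c\tau+d$, applying the transformation formula to $\gamma_\delta$ at the point $\delta\tau$ yields $\eta(\delta\gamma\tau)=\varepsilon(\gamma_\delta)\,(c\tau+d)^{1/2}\eta(\delta\tau)$. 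Taking the product over $\delta\mid N$ with exponents $r_\delta$ then gives
\[
f(\gamma\tau)=\Bigl(\prod_{\delta\mid N}\varepsilon(\gamma_\delta)^{r_\delta}\Bigr)(c\tau+d)^{\frac12\sum_{\delta\mid N}r_\delta}f(\tau)=\Bigl(\prod_{\delta\mid N}\varepsilon(\gamma_\delta)^{r_\delta}\Bigr)(c\tau+d)^{k}f(\tau).
\]
Because $k\in\Z$, the half-integer branch ambiguities in the individual factors $(c\tau+d)^{1/2}$ combine into the single integer power $(c\tau+d)^{k}$, with any residual $(-i)^{k}$-type sign absorbed into the multiplier, so the theorem reduces to proving $\prod_{\delta\mid N}\varepsilon(\gamma_\delta)^{r_\delta}=\chi(d)$.

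The main obstacle is exactly this last identification, and it is where both congruence hypotheses enter. Writing each $\varepsilon(\gamma_\delta)$ as $\exp\bigl(\pi i\,(\cdots)\bigr)$ with exponent a rational combination of $(a+d)\delta/c$ and the Dedekind sum $s(d,c/\delta)$, I would form the product over $\delta$ and reduce the accumulated exponent modulo $2$. The congruences $\sum_{\delta\mid N}\delta r_\delta\equiv 0$ and $\sum_{\delta\mid N}\frac{N}{\delta} r_\delta\equiv 0\pmod{24}$ are precisely what kill the non-quadratic parts of this exponent: the first controls the ``linear'' contribution tied to the behavior at $\infty$, while the second clears the contribution tied to the cusp at $0$ after one applies the reciprocity law for Dedekind sums to rewrite $s(d,c/\delta)$ in terms of $s(c/\delta,d)$. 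What survives is exactly the exponentiated-Dedekind-sum expression that the classical evaluation of the eta-multiplier converts, via quadratic reciprocity, into the Kronecker--Jacobi symbol $\bigl(\tfrac{(-1)^k s}{d}\bigr)$ with $s=\prod_{\delta\mid N}\delta^{r_\delta}$. I expect the delicate sign bookkeeping and the careful invocation of reciprocity, rather than any single conceptual difficulty, to be the crux of the argument.
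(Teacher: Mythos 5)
The paper does not prove this theorem: it is quoted verbatim from Ono's \emph{Web of Modularity} (p.~18), where it is attributed to Gordon--Hughes, Newman, and Ligozat, so there is no internal proof to compare yours against. Your outline is precisely the standard argument from that literature, and its structural skeleton is correct: the conjugated matrices $\gamma_\delta=\left(\begin{smallmatrix}a&b\delta\\ c/\delta&d\end{smallmatrix}\right)$ do lie in $\SL_2(\Z)$ because $\delta\mid N\mid c$, the identity $\delta(\gamma\tau)=\gamma_\delta(\delta\tau)$ with $(c/\delta)(\delta\tau)+d=c\tau+d$ is right, the power of $(c\tau+d)$ assembles to $k$ since $\sum_\delta r_\delta$ is even, and the $c=0$ case together with translation-invariance is exactly where the first congruence is forced.

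The one substantive caveat is that the entire content of the theorem sits in the identity $\prod_{\delta\mid N}\varepsilon(\gamma_\delta)^{r_\delta}=\chi(d)$, which you describe qualitatively but do not execute. Your heuristic --- first congruence kills the contribution ``at $\infty$,'' second kills the contribution ``at $0$'' after Dedekind-sum reciprocity --- is the right intuition (the two conditions are exactly $24$ times the integrality of the orders of vanishing at those two cusps), but the actual bookkeeping mixes the two: after reciprocity the terms $\tfrac{(a+d)\delta}{12c}$, $\tfrac{d\delta}{12c}$, and $\tfrac{c}{12d\delta}$ must be recombined before either congruence can be applied, and the residual Jacobi symbols $\bigl(\tfrac{c/\delta}{d}\bigr)$ or $\bigl(\tfrac{d}{c/\delta}\bigr)$ must be manipulated (with the usual care about the Kronecker-symbol conventions at $2$ and at negative arguments) into $\bigl(\tfrac{(-1)^k s}{d}\bigr)$. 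None of this will fail --- the theorem is true and this is how every published proof proceeds --- but as written your argument is a correct plan rather than a proof, with the crux asserted rather than carried out.
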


\begin{theorem} \label{oov}
Let $c$, $d$, and $N$ be positive integers with $d\mid N$ and $(c,d)=1$. If $f(\tau)$ is an eta-quotient satisfying the conditions of Theorem~\ref{ModEta}, then the order of vanishing of $f(\tau)$ at the cusp $\frac{c}{d}$ is given by
\begin{equation*}\label{order}
\frac{N}{24d\left(d,\frac{N}{d}\right)}\sum\limits_{\delta \mid N}\frac{(d,\delta)^2r_{\delta}}{\delta}.
\end{equation*}
\end{theorem}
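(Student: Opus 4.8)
The plan is to reduce everything to the transformation law of $\eta$ under $\SL_2(\Z)$, namely $\eta(Mz)=\epsilon(M)(c_Mz+d_M)^{1/2}\eta(z)$ for $M=\begin{pmatrix} a_M & b_M\\ c_M & d_M\end{pmatrix}\in\SL_2(\Z)$ with $|\epsilon(M)|=1$. I would fix the cusp $\tfrac{c}{d}$ (with $d\mid N$, $(c,d)=1$) and choose a matrix $A=\begin{pmatrix} c & b\\ d & e\end{pmatrix}\in\SL_2(\Z)$, so that $A$ sends $\infty$ to $\tfrac{c}{d}$. By the definition of a modular form, the order of vanishing at the cusp is read off from the Fourier expansion of $(f|_kA)(\tau)$ in the local uniformizer $q_w:=e^{2\pi i\tau/w}$, where $w=\tfrac{N}{d(d,N/d)}$ is the width of $\tfrac{c}{d}$ on $\Gamma_0(N)$ (obtained from $w=N/\gcd(d^2,N)$ together with $\gcd(d^2,N)=d\,(d,N/d)$). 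Since $(f|_kA)(\tau)=(d\tau+e)^{-k}f(A\tau)$ and the factor $(d\tau+e)^{-k}$ stays finite and nonzero as $\tau\to i\infty$, the order of vanishing equals $w$ times the leading exponent of $f(A\tau)$ in $q=e^{2\pi i\tau}$. As $f=\prod_{\delta\mid N}\eta(\delta\tau)^{r_\delta}$, it suffices to find the leading $q$-exponent of each $\eta(\delta A\tau)$ and sum with weights $r_\delta$.

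For a fixed $\delta\mid N$, I would set $g:=(\delta,d)$. Because $(c,d)=1$ one checks $\gcd(\delta c,d)=g$, so $\tfrac{\delta c}{g}$ and $\tfrac{d}{g}$ are coprime and there is a matrix $M=\begin{pmatrix} \delta c/g & b'\\ d/g & e'\end{pmatrix}\in\SL_2(\Z)$. A direct computation then yields the factorization
\[
\begin{pmatrix} \delta c & \delta b\\ d & e\end{pmatrix}=M\begin{pmatrix} g & h\\ 0 & \delta/g\end{pmatrix}
\]
for a suitable integer $h$, both sides having determinant $\delta$. Writing $z:=\tfrac{g\tau+h}{\delta/g}=\tfrac{g^2\tau+gh}{\delta}$, this gives $\delta\cdot A\tau=Mz$, so the eta transformation law produces
\[
\eta(\delta A\tau)=\epsilon(M)\left(\tfrac{d}{g}z+e'\right)^{1/2}\eta(z).
\]
Since $\eta(z)=e^{2\pi i z/24}\prod_{n\ge1}(1-e^{2\pi i nz})$, the factor $\eta(z)$ behaves like a nonzero constant times $e^{2\pi i\tau\,g^2/(24\delta)}$ as $\tau\to i\infty$, while the automorphy factor tends to a finite nonzero limit. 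Hence the leading $q$-exponent of $\eta(\delta A\tau)$ is exactly $\tfrac{(\delta,d)^2}{24\delta}$.

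Assembling the factors, the leading $q$-exponent of $f(A\tau)$ is $\sum_{\delta\mid N}\tfrac{(\delta,d)^2r_\delta}{24\delta}$, and multiplying by the width $w=\tfrac{N}{d(d,N/d)}$ reproduces the claimed value $\tfrac{N}{24d(d,N/d)}\sum_{\delta\mid N}\tfrac{(d,\delta)^2r_\delta}{\delta}$. The two congruence hypotheses of Theorem~\ref{ModEta} are what guarantee that the $24$th-root-of-unity multipliers $\epsilon(M)$ collected over the various $\delta$ combine into the single character $\chi$, so that $f|_kA$ is genuinely a $q_w$-series and the resulting $\ord$ is an honest order of vanishing, independent of the auxiliary choices of $A$ and $h$. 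I expect the main obstacle to be bookkeeping rather than conceptual: one must verify the matrix factorization cleanly and, more delicately, confirm that the product over $\delta$ of the multiplier systems $\epsilon(M)$ and the half-integral automorphy factors contributes nothing to the order of vanishing — feeding only into the automorphy factor $\chi(d)(d\tau+e)^k$ of $f$ — so that the leading exponent is controlled purely by the exponential parts of the eta-functions.
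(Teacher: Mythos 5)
The paper does not prove this statement at all: Theorem~\ref{oov} is quoted as background from \cite{ono} (it is Ligozat's formula, Theorem~1.65 in Ono's \emph{Web of Modularity}), so there is no internal proof to compare against. Your outline is the standard proof of that result, and its skeleton is sound: scale to $\delta A$, put it in Hermite form $M\left(\begin{smallmatrix} g & h\\ 0 & \delta/g\end{smallmatrix}\right)$ with $g=(\delta,d)=(\delta c,d)$, read off the exponential decay rate $g^2/(24\delta)$ of $\eta(\delta A\tau)$ from the transformation law, sum over $\delta$, and multiply by the width $w=N/(d^2,N)=N/\bigl(d\,(d,N/d)\bigr)$, which matches the normalization in the stated formula.

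Two of your intermediate assertions are, however, literally false as written, even though they do not damage the conclusion. For $d\neq 0$ and $k>0$ the factor $(d\tau+e)^{-k}$ tends to $0$, not to a finite nonzero limit, as $\tau\to i\infty$; and the automorphy factor $\bigl(\tfrac{d}{g}z+e'\bigr)^{1/2}$ tends to $\infty$ (it grows like $|\tau|^{1/2}$), not to a finite nonzero limit, since $d/g$ is a positive integer. The correct statement is that each of these is a half-integral power of a linear polynomial in $\tau$, hence of polynomial growth, so it cannot alter the leading exponent of an exponentially decaying $q$-expansion; and when all of them are multiplied together with $(d\tau+e)^{-k}$ one gets a ratio of degree-$k$ polynomials in $\tau$ tending to a nonzero constant, which is what makes $(f|_kA)(\tau)$ an honest $q_w$-series whose leading exponent is $\sum_{\delta\mid N}(\delta,d)^2r_\delta/(24\delta)$. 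You essentially say this in your closing paragraph, so the fix is only to replace the two "finite nonzero limit" claims with the polynomial-growth argument; with that repair the proof is complete and agrees with the literature's.
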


 \subsection{Proof of Theorem~\ref{thm1}} \label{pfthm1}
We begin by showing how to construct an integer weight holomorphic modular form that is the product of $G(24\tau)$ and an eta-quotient that is congruent to 1 modulo $p^j$. If $f(x) = 1 + \sum \limits_{n=1}^\infty a(n)x^n$ is an integral power series such that $a(n)\equiv 0 \pmod p$ for all $n \geq 1$, a straightforward induction argument yields that $f^{p^j}(x) \equiv 1 \pmod {p^{j+1}}$ for all $j\geq 0$. For $a\in\Z^+$ define 
\[f_{G,p^a}(\tau):= \prod \limits_{i=1}^t \left(\frac{\eta^{p^{a}}(24\gamma_i \tau)}{\eta(24p^{a} \gamma_i \tau)}\right)^{s_i}=\prod \limits_{i=1}^t \prod \limits_{n=1}^\infty \frac{(1-q^{24\gamma_in})^{p^{a} s_i}}{(1-q^{24p^{a}\gamma_in})^{s_i}}.\]
By the binomial theorem we have $f_{G,p^a}(\tau)\equiv 1\pmod p$, and thus $f_{G,p^a}^{p^j}\equiv 1\pmod{p^{j+1}}$ for all $j\geq 0$. Now define 

\begin{equation*}
F_{G,p^a,j}(\tau):= G(24\tau)f_{G,p^a}^{p^j}(\tau) = 
\frac{\prod \limits_{i=1}^u \eta^{r_i}(24\delta_i\tau)}{\prod \limits_{i=1}^t \eta^{s_i}(24\gamma_i\tau)}
\prod \limits_{i=1}^t \left(\frac{\left(\eta^{p^{a}}(24\gamma_i \tau)\right)}{\left(\eta(24p^{a} \gamma_i \tau)\right)}\right)^{s_ip^j}.
\end{equation*}
Since $f_{G,p^a}^{p^j}\equiv 1\pmod{p^{j+1}}$, we have that
\begin{equation} \label{equiv}
F_{G,p^a,j}(\tau) \equiv G(24\tau) = q^{E_G} \sum \limits_{n=0}^\infty b(n)q^{24n} \pmod {p^{j+1}}.
\end{equation}
Note that by Theorem ~\ref{ModEta}, $F_{G,p^a,j}$ is an eta-quotient of weight $$k_F = \frac{1}{2}\left(\sum \limits_{i=1}^u r_i  - \sum \limits_{i=1}^t s_i + \sum \limits_{i=1}^t s_i p^j(p^{a} -1)\right)$$ on $\Gamma_0(576L^2),$ where 
\begin{equation*}
L_G = \lcm(\delta_1, \cdots, \delta_u, \gamma_1,\cdots, \gamma_t).
\end{equation*}

Since $G(24\tau)$ has integer weight, $$ \frac{1}{2}\left(\sum
\limits_{i=1}^u r_i -\sum \limits_{i=1}^t s_i\right),$$ if $p$ is an odd prime or if $p = 2$ and $j \geq 1,$ then $k_F$ is an integer. Since $p \geq 2$ and  $r_i, s_i \in \Z^+, k_F$ is positive. By Theorem~\ref{ModEta}, this implies that $F_{G,p^a,j}$ is a modular form weight $k_F$ on $\Gamma_0(576L^2)$.
Moreover, since $\eta(\tau)$ does not vanish on the upper half plane, $F_{G,p^a,j}$ is a holomorphic modular form if it is holomorphic at all of the cusps of $\Gamma_0(576L^2).$ It follows from Theorem ~\ref{oov} that $F_{G,p^a,j}$ is holomorphic at a cusp $\frac{c}{d}$ if and only if 
\begin{equation*}
p^{a}\;\frac{\sum\limits_{i=1}^u\dfrac{r_i}{\delta_i}(d,24\delta_i)^2}{\sum\limits_{i=1}^t \dfrac{s_i}{\gamma_i}(d,24p^a\gamma_i)^2}+p^{a}\;\frac{\sum\limits_{i=1}^t \frac{s_i}{\gamma_i}(d,24\gamma_i)^2(p^{a+j}-1)}{\sum\limits_{i=1}^t \dfrac{s_i}{\gamma_i}(d,24p^{a}\gamma_i)^2}\geq p^j,
\end{equation*}
By Proposition~\ref{ineqCusps}, it suffices to check the inequality above for every divisor $d$ of $576L^2$. 

Since $(d,24 p^{a} \gamma _i)^2\leq p^{2a}(d,24 \gamma _i)^2$ for each $\gamma_i$, it is sufficient to show
$$p^{a}\frac{\sum\limits_{i=1}^u\dfrac{r_i}{\delta_i}(d,24\delta _i)^2}{\sum\limits_{i=1}^t\dfrac{s_i}{\gamma _i}(d,24 p^{a} \gamma_i)^2}+p^j-\frac{1}{p^{a}}\geq p^j.$$

\noindent By assumption $p^{a}$ divides every $\delta_i$ so $(d,24p^{a})\leq (d,24\delta_i).$ Since we have $(d,\gamma_i) \leq \gamma_i$, it follows that $F_{G,p^a,j}(\tau)$ is holomorphic at $\frac{c}{d}$ if 
$$p^{a}\frac{\sum\limits_{i=1}^u\tfrac{r_i}{\delta_i}}{\sum\limits_{i=1}^t\tfrac{s_i}{\gamma _i}\gamma_i^2}-\frac{1}{p^{a}}\geq 0,$$

\noindent that is, if
\begin{equation*} \label{p^a}
p^{a} \geq \sqrt{\dfrac{\sum\limits_{i=1}^t s_i\gamma_i}{\sum\limits_{i=1}^u \dfrac{r_i}{\delta_i}}}.
\end{equation*}
By Theorem ~ $\ref{Serre},$ it follows that for $F_{G,p^a,j}(\tau) = \sum b'(n)q^n$ there is a positive constant $\alpha$ such that there are at most $O\left(\frac{X}{log^{\alpha}X}\right)$ integers $n \leq X$ for which $b'(n)$ is not divisible by $p^{j+1}$. 
By (\ref{equiv}), the same holds for $b(n).$
\hfill$\square$
\begin{proof}[Proof of Corollary~\ref{cor2}]
Recall that 
\[G_{1,t,z}(\tau)=q^{\tfrac{1-tz}{24}}\tfrac{\eta^z(t\tau)}{\eta(\tau)}\qquad \text{and}\qquad G_{-1,t,z}(\tau)=q^{\frac{1-tz}{24}}\frac{\eta^{2t-z}(t\tau)\eta^{t-z}(4t\tau)}{\eta(\tau)\eta^{3(t-z)}(2t\tau)},\]
where $z$ is a positive odd integer and $t$ is a positive integer. Thus $\mathcal{D}_{G_{1,t,z}} =\mathcal{D}_{G_{-1,t,z}} = t$.
Applying Theorem~\ref{thm1}, we have that if $p$ is a prime divisor of $t$ such that $p^a\mid t$ and 
$$ p^{a}\geq\; \sqrt{\frac{1}{\frac{z}{t}}}=\sqrt{\frac{t}{z}},$$
then $G_{1,t,z}(\tau)$ is lacunary modulo $p^j$ for any $j\geq 1$ which proves (1).
If $p$ is a prime divisor of $t$ such that $p^a\mid t$ and 
$$p^a\geq \sqrt[]{\frac{1+2t(3(t-z))}{\frac{2t-z}{t}+\frac{t-z}{4t}}}=2 \ \sqrt[]{\frac{t+6t^3-6t^2z}{9t-5z}},$$
then $G_{-1,t,z}(\tau)$ is lacunary modulo $p^j$ for any $j\geq 1,$ completing the proof of (2).
\end{proof}

\section{Proof of Theorem~\ref{thm3}}\label{Proofs2}
\subsection{Generalized Eta-quotients}
We begin by recalling results on generalized eta-quotients analogous to Theorems~\ref{ModEta} and ~\ref{oov}.
\begin{theorem}[{\cite[Theorem 3]{robins}}] \label{generalizedeta}
If $f(\tau)=\prod\limits_{\substack{\delta\mid N\\g}}\eta_{\delta,g}^{r_{\delta,g}}(\tau)$ is such that
\begin{equation*}
\sum\limits_{\substack{\delta\mid N\\g}}\delta P_2\left(\frac{g}{\delta}\right)r_{\delta,g}\equiv 0\pmod 2,
\end{equation*}
and
\begin{equation*}
\sum\limits_{\substack{\delta \mid N\\g}}\frac{N}{6\delta}r_{\delta,g}\equiv 0\pmod 2,
\end{equation*}
then $f(\tau)$ is modular on $\Gamma_1(N)$. 
\end{theorem}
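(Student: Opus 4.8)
The plan is to follow the template of the classical Ligozat-style proof of Theorem~\ref{ModEta}, replacing the Dedekind eta transformation formula by the corresponding transformation law for the generalized eta functions $\eta_{\delta,g}$ supplied by \cite{robins}. Since each $\eta_{\delta,g}$ is holomorphic and non-vanishing on $\mathcal{H}$ (as already noted for these functions) and the weight of the product is $0$, proving modularity on $\Gamma_1(N)$ reduces to verifying that $f(\gamma\tau)=f(\tau)$ for every $\gamma=\begin{pmatrix}a&b\\c&d\end{pmatrix}\in\Gamma_1(N)$, i.e.\ that the automorphy multiplier assembled from the individual factors is trivial. The key structural simplification is that $\delta\mid N$ forces $a\equiv 1\pmod{\delta}$ and $c\equiv 0\pmod{\delta}$ for every $\gamma\in\Gamma_1(N)$, so each factor $\eta_{\delta,g}$ is sent to a root of unity times \emph{itself} (rather than to some $\eta_{\delta,ag}$ with a permuted index), and the task becomes the bookkeeping of multiplying out these roots of unity.

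First I would dispose of the translations. Directly from the product definition, $\tau\mapsto\tau+1$ leaves each infinite product unchanged and multiplies the prefactor $e^{\pi iP_2(g/\delta)\delta\tau}$ by $e^{\pi i\delta P_2(g/\delta)}$, so that
\[
f(\tau+1)=\exp\!\left(\pi i\sum_{\substack{\delta\mid N\\ g}}\delta\,P_2\!\left(\tfrac{g}{\delta}\right)r_{\delta,g}\right)f(\tau).
\]
Hence the first hypothesis is precisely the condition that $f$ be invariant under $T=\begin{pmatrix}1&1\\0&1\end{pmatrix}$, and therefore under every $\gamma\in\Gamma_1(N)$ with $c=0$, since such $\gamma$ are exactly the powers of $T$ (as $a\equiv 1\pmod{N}$).

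Next I would treat a general $\gamma\in\Gamma_1(N)$ with $c\neq 0$ by invoking the explicit transformation formula for $\eta_{\delta,g}$ from \cite{robins}. For such $\gamma$ this expresses $\eta_{\delta,g}(\gamma\tau)$ as $\eta_{\delta,g}(\tau)$ times a root of unity of the shape $\exp(\pi i\,\Phi_{\delta,g}(\gamma))$, where the phase $\Phi_{\delta,g}(\gamma)$ is built from a linear term in $(a+d)/c$, a generalized Dedekind--Rademacher sum attached to the residue $g/\delta$, and a term proportional to $\delta P_2(g/\delta)$. Substituting $a\equiv d\equiv 1\pmod N$, $c\equiv 0\pmod N$, and applying the reciprocity law for these generalized Dedekind sums should collapse the sum-contributions, leaving a phase that is a combination of the $\delta P_2(g/\delta)$ term and a term carrying the factor $\tfrac{N}{\delta}P_2(0)=\tfrac{N}{6\delta}$. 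Taking the product over all factors weighted by the $r_{\delta,g}$, the total multiplier becomes $\exp\bigl(\pi i\sum_{\delta,g}\delta P_2(g/\delta)r_{\delta,g}\bigr)\cdot\exp\bigl(\pi i\sum_{\delta,g}\tfrac{N}{6\delta}r_{\delta,g}\bigr)$ up to terms that vanish on $\Gamma_1(N)$, and this equals $1$ exactly under the two stated congruences modulo $2$.

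I expect the arithmetic of this last step to be the main obstacle: importing Robins' formula with its precise normalizations and, above all, carrying out the reciprocity reduction of the generalized Dedekind sums so that only the two displayed linear combinations of the $r_{\delta,g}$ survive. Verifying that the sum-terms genuinely cancel on $\Gamma_1(N)$—where they would not on all of $\Gamma_0(N)$, instead producing a nontrivial character—is the heart of the argument. The appearance of $P_2(0)=\tfrac{1}{6}$ in the coefficient $\tfrac{N}{6\delta}$ should serve as a useful consistency check, since under the identification $\eta_{\delta,0}=\eta(\delta\tau)^2$ the two congruences must specialize correctly to the conditions $\sum_{\delta\mid N}\delta r_\delta\equiv 0$ and $\sum_{\delta\mid N}\tfrac{N}{\delta}r_\delta\equiv 0\pmod{24}$ of Theorem~\ref{ModEta}.
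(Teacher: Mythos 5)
This theorem is not proved in the paper at all: it is imported verbatim from Robins \cite{robins}, so there is no internal argument to compare against. Judged on its own terms, your outline is the standard (and correct) strategy, and your structural observations are sound: since the product has weight $0$ and each $\eta_{\delta,g}$ is non-vanishing on $\mathcal{H}$, modularity reduces to triviality of the multiplier; the condition $a\equiv 1\pmod{N}$ together with $\delta\mid N$ does force $ag\equiv g\pmod{\delta}$, so no permutation of the indices $g$ occurs on $\Gamma_1(N)$ (this is exactly why the statement lives on $\Gamma_1(N)$ rather than $\Gamma_0(N)$); the translation computation correctly identifies the first congruence as $T$-invariance; and your consistency check against Theorem~\ref{ModEta} via $\eta_{\delta,0}=\eta(\delta\tau)^2$ works out (the factor $P_2(0)=\tfrac16$ converts the conditions modulo $2$ into the classical conditions modulo $24$).

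The gap is the one you yourself flag: everything for $c\neq 0$ is asserted rather than derived. The claim that, after substituting $a\equiv d\equiv 1\pmod N$, $c\equiv 0\pmod N$ and applying reciprocity for the generalized Dedekind--Rademacher sums, the phase ``collapses'' to exactly the two displayed linear combinations modulo $2$ is precisely the content of the theorem, and no part of that computation is performed. In particular, you do not exhibit Robins' transformation law, do not state the reciprocity identity you intend to use, and do not show why the Dedekind-sum contributions vanish on $\Gamma_1(N)$ while they would produce a genuine character on $\Gamma_0(N)$. Note also that checking $T$ and then ``general $\gamma$ with $c\neq 0$'' is not a reduction to generators (explicit generators of $\Gamma_1(N)$ are not available here), so the $c\neq 0$ computation must be done for arbitrary $\gamma$, which makes it unavoidable. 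As written, this is a correct plan for reconstructing Robins' proof, not a proof; the decisive arithmetic step remains to be supplied (or the result should simply be cited, as the paper does).
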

Recall that Proposition~\ref{ineqCusps} gives a complete set of  cusp representatives on $\Gamma_1(N)$. The following theorem gives a formula for computing the orders of generalized eta-quotients at the cusps of $\Gamma_1(N)$.
\begin{theorem}[{\cite[Theorem 4]{robins}}]  \label{general cusp}
Given $f(\tau)$ of the form in Theorem \ref{generalizedeta} and a cusp $\frac{\lambda}{\mu\epsilon}\in C_1(N)$, the order of vanishing of $f(\tau)$ at this cusp in the uniformizing variable $q^{\tfrac{\epsilon}{N}}$is 
\begin{equation*}
\frac{N}{2}\sum\limits_{\substack{\delta \mid N\\g}}\frac{(\delta,\epsilon)^2}{\delta\epsilon}P_2\left(\frac{\lambda g}{(\delta,\epsilon)}\right)r_{\delta,g}.
\end{equation*}
\end{theorem}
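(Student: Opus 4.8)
The plan is to compute the order of vanishing of a single generalized eta-function $\eta_{\delta,g}(\tau)$ at the cusp $\tfrac{\lambda}{\mu\epsilon}$ and then invoke additivity: since the order of a product is the sum of the orders of its factors, the order of $f(\tau)=\prod_{\delta\mid N,\,g}\eta_{\delta,g}^{r_{\delta,g}}(\tau)$ is $\sum r_{\delta,g}\,\mathrm{ord}_{\lambda/\mu\epsilon}(\eta_{\delta,g})$. Because each $\eta_{\delta,g}$ is a weight-zero modular function, its order at the cusp is simply the leading exponent of its expansion after conjugating the cusp to $\infty$: choosing $\sigma=\left(\begin{smallmatrix}\lambda & *\\ \mu\epsilon & *\end{smallmatrix}\right)\in\SL_2(\Z)$, which exists because the cusp conditions $(\mu,\lambda)=(\lambda,N)=(\mu,N)=1$ together with $\epsilon\mid N$ force $(\lambda,\mu\epsilon)=1$, I would expand $\eta_{\delta,g}(\sigma\tau)$ as $\tau\to i\infty$ and read off the leading power of the uniformizer $q^{\epsilon/N}$.

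The key device is to recognize $\eta_{\delta,g}$ as a specialization of a Siegel (Klein) function. Comparing product expansions shows $\eta_{\delta,g}(\tau)=-\,g_{(g/\delta,\,0)}(\delta\tau)$, where the Siegel function $g_{(r,s)}(\tau)=-q^{\tfrac12 P_2(r)}\cdots$ has leading exponent $\tfrac12 P_2(r)$ in its own nome and satisfies the $\SL_2(\Z)$-equivariance $g_{a}(\gamma\tau)=\varepsilon(\gamma)\,g_{a\gamma}(\tau)$, for a root of unity $\varepsilon(\gamma)$ and the row-vector action $a\gamma$. The multiplier $\varepsilon(\gamma)$ is irrelevant for the order, so I only need to track how the index vector and the nome transform. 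Writing the scaling-then-translation as the single matrix $M:=\left(\begin{smallmatrix}\delta & 0\\0&1\end{smallmatrix}\right)\sigma$ of determinant $\delta$, I would put $M$ into Hermite normal form $M=\gamma D$ with $\gamma\in\SL_2(\Z)$ and $D=\left(\begin{smallmatrix}\delta_1 & h\\0 & \delta_2\end{smallmatrix}\right)$. A direct Euclidean computation gives $\delta_1=\gcd(\delta\lambda,\mu\epsilon)$ and $\delta_2=\delta/\delta_1$; the cusp conditions collapse this gcd to $\delta_1=(\delta,\epsilon)$, using $(\lambda,\mu\epsilon)=1$ and $(\delta,\mu)=1$, the latter because $\delta\mid N$.

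The two factors of the answer then fall out of this decomposition. Applying equivariance, $g_{(g/\delta,0)}(M\tau)=g_{(g/\delta,0)}(\gamma D\tau)=\varepsilon(\gamma)\,g_{(g/\delta,0)\gamma}(D\tau)$, and the first coordinate of $(g/\delta,0)\gamma$ works out to $\tfrac{g}{\delta}\cdot\tfrac{\delta\lambda}{(\delta,\epsilon)}=\tfrac{\lambda g}{(\delta,\epsilon)}$, producing the Bernoulli evaluation $P_2\!\left(\tfrac{\lambda g}{(\delta,\epsilon)}\right)$, which sees only the fractional part of its argument. Meanwhile $D\tau=\tfrac{(\delta,\epsilon)^2\tau+h(\delta,\epsilon)}{\delta}$, so the nome satisfies $e^{2\pi i D\tau}=\zeta\,q^{(\delta,\epsilon)^2/\delta}$ for a root of unity $\zeta$; hence the leading exponent in $q=e^{2\pi i\tau}$ is $\tfrac{(\delta,\epsilon)^2}{\delta}\cdot\tfrac12 P_2\!\left(\tfrac{\lambda g}{(\delta,\epsilon)}\right)$. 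Re-expressing this in the cusp uniformizer $q^{\epsilon/N}$ multiplies the exponent by $N/\epsilon$, yielding exactly $\tfrac{N}{2}\tfrac{(\delta,\epsilon)^2}{\delta\epsilon}P_2\!\left(\tfrac{\lambda g}{(\delta,\epsilon)}\right)$; summing against the $r_{\delta,g}$ finishes the proof. The main obstacle is the matrix bookkeeping in the middle step: establishing the precise Siegel-function transformation law, for which I would cite the formulas of Schoeneberg or of Kubert and Lang (or rederive them from the theta transformation), and pinning down the Hermite factorization together with the gcd simplifications forced by the $\Gamma_1(N)$ cusp parametrization of Proposition~\ref{ineqCusps}. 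Once these are in hand, the extraction of the argument $\tfrac{\lambda g}{(\delta,\epsilon)}$ of $P_2$ and of the constant $\tfrac{N}{2}\tfrac{(\delta,\epsilon)^2}{\delta\epsilon}$ is automatic.
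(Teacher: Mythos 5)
Your argument is sound, but note that the paper does not prove this statement at all: Theorem~\ref{general cusp} is quoted directly from \cite{robins}, so there is no internal proof to compare against. Your route --- identifying $\eta_{\delta,g}(\tau)$ with $-g_{(g/\delta,0)}(\delta\tau)$ for a Siegel function $g_a$, invoking the Kubert--Lang equivariance $g_a(\gamma\tau)=\varepsilon(\gamma)g_{a\gamma}(\tau)$, and extracting the order from the Hermite factorization of $\left(\begin{smallmatrix}\delta&0\\0&1\end{smallmatrix}\right)\sigma$ --- is essentially the standard derivation in the literature, and the bookkeeping checks out: $\delta_1=\gcd(\delta\lambda,\mu\epsilon)$ does collapse to $(\delta,\epsilon)$ via $(\lambda,\mu\epsilon)=1$ and $(\delta,\mu)=1$, the transformed index has first coordinate $\lambda g/(\delta,\epsilon)$, and the nome rescaling by $(\delta,\epsilon)^2/\delta$ followed by conversion to $q^{\epsilon/N}$ produces exactly the constant $\tfrac{N}{2}\tfrac{(\delta,\epsilon)^2}{\delta\epsilon}$. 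Two small points deserve explicit mention in a complete write-up. First, the case $g\equiv 0\pmod{\delta}$ (which the paper genuinely uses, e.g.\ $\eta_{\delta,0}=\eta(\delta\tau)^2$) falls outside the Siegel-function framework, since $g_a$ is undefined for $a\in\Z^2$; there you must fall back on the classical order formula for $\eta(\delta\tau)$ at a cusp (Theorem~\ref{oov}), which happily agrees with the stated formula because $P_2(0)=\tfrac16$. Second, when you read off the leading exponent of $g_{a\gamma}$ you should first reduce the index modulo $\Z^2$ (which changes $g_a$ only by a root of unity) and check that the two infinite products then contribute a nonzero constant term, so that the order really is $\tfrac12 P_2$ of the first coordinate; the periodicity of $P_2$ makes the exponent come out right either way. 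With those caveats supplied, and a citation to Kubert--Lang or Schoeneberg for the transformation law, the proof is complete.
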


\subsection{Proof of Theorem~\ref{thm3}}
Given a generalized eta-quotient $H(\tau)$ of the form (\ref{H}), similar as in the proof of Theorem~\ref{thm1}, we construct an integer weight holomorphic modular form that is the product of $H(\widetilde N\tau)$ and an eta-quotient that is congruent to 1 modulo $p^j$, where $\widetilde N = 24L_H$ and $$L_H:=\lcm (\delta_1,\cdots,\delta_u,\gamma_1,\cdots, \gamma_t\, \delta_1',\cdots, \delta_v', \gamma_1', \cdots, \gamma_x', \delta_1'', \cdots, \delta_w'',\gamma_1'',\cdots,\gamma_y'').$$ Note that for any 
\begin{equation*}
\delta \in \{ \delta_1,\cdots,\delta_u,\gamma_1,\cdots, \gamma_t, \delta_1',\cdots, \delta_v', \gamma_1', \cdots, \gamma_x', \delta_1'', \cdots, \delta_w'',\gamma_1'',\cdots,\gamma_y''\}
\end{equation*}
and $g \in \Z,$ we have $P_2(\tfrac{g}{\delta})\delta \widetilde N  = \left(\left(\frac{l}{\delta}\right)^2 - \frac{l}{\delta} + \frac{1}{6}\right)\delta( 24 \delta)\frac{L_H}{\delta} \in 2\Z.$ For $a\in\Z^+$, define 
\begin{align*}
f_{H,p^a}(\tau):= &\prod_{i=1}^t\left(\frac{\eta^{p^a}_{\gamma_i,0}(\widetilde N\tau)}{\eta_{\gamma_ip^a,0}(\widetilde N\tau)}\right)^{s_i}\prod_{i=1}^v \left(\frac{\eta^{p^a}_{\delta_i',0}(\widetilde N\tau)}{\eta_{\delta_i'p^a,0}(\widetilde N\tau)}\right)^{r_i'}\\
&\prod_{i=1}^x \left(\frac{\eta^{p^a}_{\tfrac{\gamma_i'}{2},0}(\widetilde N\tau)}{\eta_{\tfrac{\gamma_i'p^a}{2},0}(\widetilde N\tau)}\right)^{s_i'}\prod_{i=1}^y \left(\frac{\eta^{p^a}_{\gamma_i'',0}(\widetilde N\tau)}{\eta_{\gamma_i''p^a,0}(\widetilde N\tau)}\right)^{s_i''}.
\end{align*}
By the identities in (\ref{relating getEta and regEta}) and the binomial theorem, $f_{H,p^a}(\tau)\equiv 1\pmod p$, and by an induction argument similar to that in the proof of Theorem~\ref{thm1}, $f_{H,p^a}^{p^j}(\tau)\equiv 1\pmod{p^{j+1}}$ for all $j\geq 0$. Now define
\begin{align*}
F_{H,p^a,j}(\tau):&=H(\widetilde N\tau)f^{p^j}_{H, p^a}(\tau)\\
&=\frac{\prod\limits_{i=1}^u\eta^{r_i}_{\delta_i,g_i}(\widetilde N\tau)}{\prod\limits_{i=1}^t\eta^{s_i}_{\gamma_i,h_i}(\widetilde N\tau)} \cdot \frac{\prod\limits_{i=1}^v\eta^{r_i'}_{\delta_i',\tfrac{\delta_i'}{2}}(\widetilde N\tau)}{\prod\limits_{i=1}^x\eta^{s_i'}_{\gamma_i',\tfrac{\gamma_i'}{2}}(\widetilde N\tau)}\cdot \frac{\prod\limits_{i=1}^w\eta^{r_i''}_{\delta_i'',0}(\widetilde N\tau)}{\prod\limits_{i=1}^y\eta^{s_i''}_{\gamma_i'',0}(\widetilde N\tau)}\cdot f^{p^j}_{H,p^a}(\tau).
\end{align*}
Then
\begin{align*}
F_{H,p^a,j}(\tau) &\equiv H(\widetilde N\tau)
= q^{\widetilde NE_H}\sum c(n)q^{\widetilde Nn} \pmod {p^{j+1}}.
\end{align*}
where $E_H$, defined after (\ref{H}), 
is the order of vanishing at infinity.

Note that by ~(\ref{n delta n g}), we can apply Theorem~\ref{generalizedeta} to $H(\widetilde N\tau)$, which is a modular form of weight $k_H = w - y $ on $\Gamma_1(576L_H^2).$
Theorem~\ref{ModEta} shows that $f^{p^j}_{H, p^a}(\tau)$ is modular on $\Gamma_0(576L_H^2),$ thus modular on $\Gamma_1(576L_H^2).$ Therefore $F_{H,p^a,j}(\tau)$ is modular of weight 
$$k_{F_H}= w - y + \left(\sum \limits_{i=1}^t s_i + \sum\limits_{i=1}^v r_i' + \sum \limits _{i=1}^x s_i'+\sum\limits_{i=1}^y s_i''\right)p^j(p^{a} -1)$$ on $\Gamma_1(576L_H^2).$ 
Using Theorem~\ref{general cusp}, we find that $F_{H,p^a,j}$ is holomorphic at a cusp $\frac{\lambda}{\mu \epsilon}\in C_1(N)$ if and only if
\begin{align*}
&p^{a} \cdot \frac{\sum \limits_{i=1}^u \frac{(\delta_i,\epsilon)^2}{\delta_i}P_2\left(\frac{\lambda g_i}{(\delta_i,\epsilon)}\right)r_i + \sum\limits_{i=1}^w\frac{(\delta_i'',\epsilon)^2}{6\delta_i''} r_i'' + \sum \limits _{i=1}^x\frac{\left(\gamma_i'/2 ,\epsilon\right)^2}{3 \cdot \gamma_i'} s_i'+\sum \limits_{i=1}^v\frac{(\delta_i',\epsilon)^2}{\delta_i'}\left (P_2\left(\frac{\lambda \delta_i'}{2(\delta_i',\epsilon)}\right)+ \tfrac{1}{6}\right)r_i'}{\sum \limits_{i=1}^t \frac{(\gamma_i p^{a},\epsilon)^2}{\gamma_i}s_i + \sum\limits_{i=1}^v\frac{(\delta_i' p^{a},\epsilon)^2}{\delta_i'} r_i' + \sum \limits _{i=1}^x\frac{(\gamma_i'/2 p^{a},\epsilon)^2}{\gamma_i'/2} s_i'+\sum\limits_{i=1}^y\frac{(\gamma_i'' p^{a},\epsilon)^2}{\gamma_i''}s_i''}\\ 
&+ p^{a}\cdot \frac{\sum \limits_{i=1}^t\frac{(\gamma_i,\epsilon)^2}{\gamma_i}\left (\tfrac{1}{6}-P_2\left(\frac{\lambda h_i}{(\gamma_i,\epsilon)}\right)\right)s_i - \sum\limits_{i=1}^x\frac{(\gamma_i',\epsilon)^2}{\gamma_i'}P_2\left(\frac{\lambda \gamma_i'}{2(\gamma_i',\epsilon)}\right)s_i'}{\sum \limits_{i=1}^t \frac{(\gamma_i p^{a},\epsilon)^2}{\gamma_i}s_i + \sum\limits_{i=1}^v\frac{(\delta_i', p^{a},\epsilon)^2}{\delta_i'} r_i' + \sum \limits _{i=1}^x\frac{(\gamma_i'/2 p^{a},\epsilon)^2}{\gamma_i'/2} s_i'+\sum\limits_{i=1}^y\frac{(\gamma_i'' p^{a},\epsilon)^2}{\gamma_i''}s_i''} \\ &+\frac{p^{2a+j}-p^a}{6}\cdot
\frac{\sum\limits_{i=1}^t\frac{(\gamma_i,\epsilon)^2}{\gamma_i}s_i + \sum\limits_{i=1}^v \frac{(\delta_i',\epsilon)^2}{\delta_i'}r_i' + \sum\limits_{i= 1}^x \frac{(\gamma_i'/2,\epsilon)^2}{\gamma_i'/2}s_i' + \sum\limits_{i=1}^y \frac{(\gamma_i'',\epsilon)^2}
{\gamma_i''}s_i''} {\sum \limits_{i=1}^t \frac{(\gamma_i p^{a},\epsilon)^2}{\gamma_i}s_i + \sum\limits_{i=1}^v\frac{(\delta_i' p^{a},\epsilon)^2}{\delta_i'} r_i' + \sum \limits _{i=1}^x\frac{(\gamma_i'/2 p^{a},\epsilon)^2}{\gamma_i'/2} s_i'+\sum\limits_{i=1}^y\frac{(\gamma_i'' p^{a},\epsilon)^2}{\gamma_i''}s_i''} \geq \tfrac{1}{6}p^j.
\end{align*}
Since $-\tfrac{1}{12} \leq P_2\left(x\right) \leq \tfrac{1}{6}$,  $1 \leq (\delta,\epsilon) \leq \delta, \text{ and }\,(\delta p^{a},\epsilon)^2 \leq p^{2a}(\delta,\epsilon)^2,$ this inequality holds if 
$$p^{a}\frac{-\tfrac{1}{12} \sum\limits_{i=1}^u\delta _i r_i -\tfrac{1}{6} \sum\limits_{i=1}^x\tfrac{\gamma_i'}{2}s_i'+ \frac{1}{6}\sum\limits_{i=1}^w\frac{r_i''}{\delta_i''} + \tfrac{1}{12}\sum\limits_{i=1}^v \tfrac{r_i'}{\delta_i'}}{\sum\limits_{i=1}^t\gamma_is_i +\frac{1}{2}\sum\limits_{i=1}^x\gamma_i's_i'+\sum\limits_{i=1}^y\gamma_i''s_i''+\sum\limits_{i=1}^v\delta_i'r_i'} + \left(\tfrac{p^{2a+j}}{6}-\tfrac{p^{a}}{6}\right)p^{-2a}\geq \tfrac{1}{6}p^j,$$
that is, if $$p^a\geq \sqrt[]{\frac{\sum\limits_{i=1}^t\gamma_is_i+\frac{1}{2}\sum\limits_{i=1}^x\gamma_i's_i'+\sum\limits_{i=1}^y\gamma_i''s_i''+\sum\limits_{i=1}^v\delta_i'r_i'}{-\frac{1}{2}\sum\limits_{i=1}^u\delta_ir_i+\frac{1}{2}\sum\limits_{i=1}^v\tfrac{r_i'}{\delta_i'}+\sum\limits_{i=1}^w\frac{r_i''}{\delta_i''}-\frac{1}{2}\sum\limits_{i=1}^x\gamma_i's_i'}}.$$
We apply Theorem~\ref{Serre} to $F_{H,p^a,j}(\tau)$. The remainder of the proof follows as in the proof of Theorem~\ref{thm1}.
\hfill$\square$
\section{Examples} \label{exs}

\subsection{Example of Corollary~\ref{cor2}}
Consider the weight one eta-quotient 
\begin{equation*}
G_{1,18,3}(\tau)= \frac{\eta^3(18\tau)}{\eta(\tau)}.
\end{equation*}
which is not holomorphic at the cusp $\frac{1}{1}$. 
Since $3^2$ divides $18$ and $3^2\geq\sqrt{6}$, by Corollary~\ref{cor2} we have that  $\delta(G_{1,18,3},3; X)\rightarrow1$. In contrast, $2<\sqrt{6}$ and $5\nmid 18$, so the corollary does not apply when $p\in\{2,5\}$. 
The  table below shows $\delta(G_{1,18,3},p;X)$ for $p=2,3,\text{ and }5$ out to the first million coefficients in the power series expansion of $G_{1,18,3}(\tau)$. 

\begin{table}[h] 
\centering
\begin{tabular}{|c|c|c|c|}
\hline 
 $X$ & $\delta(G_{1,18,3},2;X)$ & $\delta(G_{1,18,3},3;X)$ & $\delta(G_{1,18,3},5;X)$\\\hline
   1,000\;  & \ 0.477000  &  \ 0.634000 &  0.1840000 \ \\\hline
     \vdots &   \vdots    &    \vdots   & \vdots     \\\hline 
 200,000\;  & \ 0.498880  & \ 0.687250  & \ 0.199315 \ \\\hline
 400,000\;  & \ 0.498670  & \ 0.693443  & \ 0.199788 \ \\\hline
 600,000\;  & \ 0.499515  & \ 0.696803  & \ 0.200428 \ \\\hline
 800,000\;  & \ 0.500148  & \ 0.699180  & \ 0.200126 \ \\\hline
1,000,000\; & \ 0.500073  & \ 0.701041  & \ 0.200324 \ \\\hline
\end{tabular}
\vspace{.1 in}
\caption{Data for $G_{1,18,3}(\tau)$.}
\end{table}
\FloatBarrier
\subsection{Example of Theorem~\ref{thm3}}
Consider the generalized eta-quotient 
\begin{equation*}
K(\tau) = \frac{\eta_{9,0}(\tau)}{\eta_{6,1}(\tau)}.
\end{equation*}
At the cusp $\tfrac{1}{1}$, $K(\tau)$ is not holomorphic. 
Since $\mathcal{D}_K =9,$ we have that  $p^a\mid \mathcal{D}_K$ for $p=3$, $a=2$. Furthermore, we have that $K(\tau)$ satisfies the inequalities
\begin{equation*}
\frac{r''}{\delta''}=\frac{1}{9}> 0,
\end{equation*}
and
\begin{equation*}
p^a=3^2=9\geq \sqrt{54}= \sqrt{\frac{6\cdot 1}{\tfrac{1}{9}}}= \sqrt{\frac{\gamma s}{\tfrac{r''}{\delta''}}},
\end{equation*}
so by Theorem ~\ref{thm3}, $K(\tau)$ is lacunary modulo $3^j$ for any positive integer $j$. The table below illustrates the extremely slow convergence of $\delta(K,3;X)$ to $1$.
\begin{table}[h] 
\centering
\begin{tabular}{|c|c|c|}
\hline 
$X$  & $\delta(K,3;X)$ & $\delta(K,5;X)$\\\hline
\ 9000   & \ 0.32811  & \ 0.19811 \ \\\hline
\ 18000  & \ 0.33450  & \ 0.19833 \ \\\hline
\ 27000  & \ 0.33919  & \ 0.19907 \ \\\hline
\ 36000  & \ 0.33910  & \ 0.20042 \ \\\hline
\ 45000  & \ 0.34509  & \ 0.20033 \ \\\hline
\end{tabular}
\vspace{.1 in}
\caption{Data for $K(\tau)$.}
\end{table}
\FloatBarrier
\bibliography{references}
 
\end{document}